\newcommand{\mass}[1]{|#1|}
\newcommand{\bmass}[1]{|\delta #1|}
\newcommand{\LL}{\mathcal L} 
\newcommand{\vv}{\mathbf{v}}
\newcommand{\kk}{\kappa}
\newcommand{\Div}{\operatorname{div}}
 \newcommand{\RR}{\mathbf{R}}  
 \newcommand{\dist}{\operatorname{dist}}
 \newcommand{\area}{\operatorname{area}}
 \newcommand{\eps}{\epsilon}
 \newcommand{\Tan}{\operatorname{Tan}}
 \newcommand{\spt}{\operatorname{spt}}
\def\begfig {
\begin{figure}
\small }
\def\endfig {
\normalsize
\end{figure}
}
    \newtheorem{theorem}    {Theorem}       [section]
    \newtheorem{corollary}  [theorem]     {Corollary}
    \newtheorem{proposition}       [theorem]       {Proposition}
    \theoremstyle{definition}
    \newtheorem{definition}  [theorem] {Definition}
    \theoremstyle{definition}
    \newtheorem{remark}   [theorem]       {Remark}
\begin{document}


%
%
%
%
%
%
%

\renewcommand{\thesubsection}{\thetheorem}

\title[Isoperimetric Inequalities]{Which Ambient Spaces Admit \\
         Isoperimetric Inequalities \\
         for Submanifolds?}
\author{Brian White}
\thanks{This research was supported in part by the National Science Foundation 
  grants~DMS-0406209 and~DMS-0707126.}
\email{white@math.stanford.edu}
\date{February 23, 2008.  Revised December 16,  2008}

\begin{abstract}
 We give simple  conditions on an ambient manifold 
 that are necessary and sufficient for isoperimetric inequalities to hold.
\end{abstract}

\subjclass[2000]{Primary: 53A10; Secondary: 49Q05}
\keywords{isoperimetric inequality, minimal surface, mean curvature, varifold}

\maketitle

\section{Introduction}\label{intro}

Let $N$ be a compact $(n+1)$-dimensional Riemannian manifold with mean convex boundary.
Can one bound the $n$-dimensional area of a minimal hypersurface in $N$ in terms of the $(n-1)$-dimensional area of
its boundary?   The absence of any closed minimal hypersurface in $N$ is certainly
a necessary condition, since such a hypersurface would contradict any
such bound.  In this paper, we show that this necessary condition is also sufficient.  Indeed, we
show (Theorem~\ref{HypersurfaceCase})
that
the absence of such a hypersurface implies the existence of a $c=c_N<\infty$ such that
\begin{equation}\label{LinearInequality}
   |M| \le c \left(  |\partial M| + \int_M|H| \right)
\end{equation}
for every $n$-dimensional variety $M$ in $N$, where $H(x)$ is the mean curvature of $M$ at $x$.  We also prove an analogous but weaker
result (Theorem~\ref{GeneralCodimension})
about isoperimetric inequalities for surfaces of any codimension.
(For surfaces in compact subsets of Euclidean space,
 the inequality~\eqref{LinearInequality} has a 
  simple, well-known proof: see Theorem~\ref{EasyVersion}.)

The general codimension theorem involves varifolds in an essential way, but the proof uses
only the most
elementary facts about them.  Indeed, the theorem is a rather
trivial consequence of the basic definitions.
For readers not familiar with varifolds, we have included all the necessary
background (with proofs)  in an appendix.

The hypersurface theorem is a consequence of the general theorem together with two
facts about mean curvature flow: (1) a hypersurface moving by mean curvature flow cannot
bump into a minimal variety, and (2) a mean convex hypersurface moving by mean curvature
flow either vanishes in finite time or converges as $t\to\infty$ to a minimal hypersurface with a very
 small singular set.  These facts are not elementary, but otherwise this paper is for the most part self-contained.
 
 After proving the main theorems in Section~\ref{main}, we give two applications
 in Section~\ref{examples}.  Section~\ref{disks} 
 discusses the special case of minimal embedded disks in 
$3$-manifolds.   Section~\ref{nonsmooth} explains how to extend
the hypersurface results to ambient manifolds with piecewise smooth
boundaries. Section~\ref{NonlinearSection} discusses the nonlinear isoperimetric
inequality obtained by replacing $|M|$ with  $|M|^{(n-1)/n}$ in~\eqref{LinearInequality}.

\section{The Main Results}\label{main}

Suppose that $N$ is a compact Riemannian manifold with 
smooth boundary.  We say that $N$ is {\it mean convex}
provided that the mean curvature vector at each point
of $\partial N$ is a nonnegative multiple of the inward-pointing
unit normal.

\newcommand{\LLL}{\LL_{\text{$m$-rec}}}

We begin with the main result for hypersurfaces.  The reader
may wish to ignore the ``furthermore\dots" assertion until it
is used in Section~\ref{examples}.

\begin{theorem}\label{HypersurfaceCase}
Suppose that $N$ is a compact, connected, mean-convex
 $(n+1)$-dimensional Riemannian manifold
with smooth, nonempty boundary, 
and that $n<7$. The following are equivalent:
\begin{enumerate}[\upshape (a)]
\item\label{hypersurface1}
 $N$ contains no smooth, closed, embedded minimal hypersurface. 
\item\label{hypersurface2}
 There is an increasing function $\phi: \RR^+\to \RR^+$ with $\phi(0)=0$ such that
\[
   |M| \le \phi(|\partial M|)
\]
for every smooth $n$-dimensional minimal surface $M$ in $N$.
\item\label{hypersurface3}
 There is a constant $c< \infty$ such that if $M$ is a smooth
$n$-dimensional manifold
in $N$, then
\[
     |M| \le c \left( |\partial M| + \int_M |H|\,dA\right).
\]
\item\label{hypersurface4}
 There is a constant $c<\infty$ such that if $V$ is an $n$-dimensional varifold in $N$, then
\[
    \mass{V} \le c \,\bmass{V}.
\]
where $\mass{V}$ and $\bmass{V}$ are the total mass and the total first variation 
measure, respectively, of $V$.
\end{enumerate}
Furthermore, 
if~\eqref{hypersurface1}-\eqref{hypersurface4} fail and if no component
of $\partial N$ is a minimal surface,
then 
the interior of $N$ contains a smooth, stable, two-sided closed
minimal hypersurface that  is an embedded submanifold
or the double cover of an embedded submanifold.
\end{theorem}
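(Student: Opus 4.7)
I would prove the four-fold equivalence as the chain $(d)\Rightarrow(c)\Rightarrow(b)\Rightarrow(a)$ plus the hard direction $(a)\Rightarrow(d)$, and obtain the ``furthermore'' clause from the same MCF-based dichotomy used in the hard direction.

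The easy chain is routine: $(d)\Rightarrow(c)$ by viewing a smooth $M$ with boundary as a varifold whose first variation obeys $\bmass{M}\le\int_M|H|+|\partial M|$ and then applying (d); $(c)\Rightarrow(b)$ by taking $\phi(s)=cs$ and using $H\equiv 0$ on minimal $M$; $(b)\Rightarrow(a)$ because any closed minimal hypersurface would satisfy $|M|\le\phi(0)=0$, contradicting positive area.

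For $(a)\Rightarrow(d)$, I would flow $\partial N$ into $N$ by mean curvature flow $\{M_t\}_{t\ge 0}$. By fact~(2) of the introduction, one of two alternatives occurs: either (I) $M_t$ becomes empty at some finite time $T$; or (II) $M_t$ converges to a minimal hypersurface $M_\infty$ with very small singular set. Under (a), alternative (II) is forbidden since $n<7$ together with Schoen--Simon regularity upgrades $M_\infty$ to a smooth, closed, embedded minimal hypersurface (or the orientable double cover of such) in $N$, contradicting (a). So (I) holds. By fact~(1), the support of any stationary varifold $V$ in $N$ cannot be crossed by $M_t$, so it must lie in the shrinking region enclosed by $M_t$ for every $t\in[0,T)$; but that region is empty at time $T$, forcing $V=0$. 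Thus $N$ admits no nonzero stationary varifold. The inequality (d) then follows by a standard compactness argument: any counterexample sequence $V_k$ with $\mass{V_k}=1$ and $\bmass{V_k}\to 0$ admits, on the compact space $N$, a weak-$\ast$ subsequential limit $V_\infty$ that preserves total mass and is stationary, contradicting the conclusion just obtained.

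For the ``furthermore'' clause, suppose (a)--(d) all fail and no component of $\partial N$ is minimal. Since (d) fails, the argument above rules out alternative (I), so alternative (II) must hold, delivering a limit $M_\infty$ that is smooth (by $n<7$), closed, embedded (or an orientable double cover of an embedded submanifold), and stable as a one-sided MCF limit from the mean convex side. The boundary maximum principle --- applied at any putative meeting point of $M_\infty$ with $\partial N$ --- combined with the assumption that no component of $\partial N$ is minimal places $M_\infty$ in the interior. The main obstacle I anticipate is the extension of fact~(1) from smooth minimal varieties to arbitrary stationary varifolds, which is needed to rule out nonzero stationary varifolds under alternative~(I); in the paper this would presumably be handled by a barrier/approximation argument at points of closest approach between the MCF front and the varifold, together with an appeal to the codimension-one structure and the general varifold theory developed in the appendix.
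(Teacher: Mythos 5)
Your plan tracks the paper's proof closely: the easy chain of implications, then $(a)\Rightarrow(d)$ (stated by the paper as its contrapositive and factored through Theorem~\ref{FlowTheorem}) via mean curvature flow from $\partial N$, the avoidance principle, and the varifold compactness theorem, with the ``furthermore'' clause read off from the flow's limit. The one real gap is at the start of the avoidance argument: to conclude $\spt\|V\|\subset K_t$ for all $t$ you need $\spt\|V\|$ to be disjoint from $M_0=\partial N$ at time zero, and this is not automatic. The paper supplies it with the strong maximum principle (Theorem~\ref{MaximumPrinciple}): if $\spt\|V\|$ met $\partial N$, it would have to contain an entire component of $\partial N$, and that component would then be a closed minimal hypersurface --- contradicting (a) in the equivalence direction, and contradicting the hypothesis that no component of $\partial N$ is minimal in the ``furthermore'' direction. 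You do invoke a boundary maximum principle, but only to locate $M_\infty$ in the interior, which in fact already follows from the monotone nesting of the regions $K_t$; the genuinely necessary use of the maximum principle is the initial-disjointness step you omitted. The technical obstacle you flag at the end --- avoidance between a smooth mean curvature flow and the support of an arbitrary stationary varifold --- is exactly what the paper proves in Proposition~\ref{AvoidanceProposition} and Theorem~\ref{AvoidanceTheorem}: one runs a slightly accelerated barrier flow from a neighborhood of $M_0$ so that its boundary has strictly positive normal speed, and then first contact with $\spt\|V\|$ would again violate Theorem~\ref{MaximumPrinciple}.
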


Two-sidedness of $M$ means (by definition) that the normal bundle is orientable, i.e, that $M$ has
a continuous unit normal vectorfield.  If the ambient space $N$ is orientable, then two-sidedness
of $M$ is equivalent to orientability of $M$.

\begin{remark}\label{SingularRemark}
Theorem~\ref{HypersurfaceCase} remains true (with the same proof) 
for  $n\ge 7$ provided  ``smooth''  
is replaced by ``smooth except for a singular set of Hausdorff dimension 
at most  $n-7$"
in statements~\eqref{hypersurface1},
 \eqref{hypersurface2},
 \eqref{hypersurface3},
and in the ``furthermore" assertion.  
\end{remark}

For general codimensions (and without assuming
any mean convexity or boundary regularity of $N$), we have:

\begin{theorem}\label{GeneralCodimension}
Suppose $N$ is a compact subset of an $(n+1)$-dimensional Riemannian
manifold and that $k\le n$.  The following are equivalent:
\begin{enumerate}[\upshape (a)]
\item\label{General1}
 The space $N$ contains no nonzero, stationary, $k$-dimensional varifolds
\item\label{General2}
 There is an increasing function $\phi: \RR^+\to \RR^+$ with $\phi(0)=0$
such that
\[ 
     \mass{V} \le \phi (\bmass{V})
\]
for every $k$-dimensional varifold $V$ in $N$ with $\bmass{V} < \infty$.
\item\label{General3}
 There is a constant $c<\infty$ such that
\[
   \mass{V}  \le c\, \bmass{V}
\]
for every $k$-dimensional varifold $V$ in $N$.
\end{enumerate}
\end{theorem}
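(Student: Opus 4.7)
The plan is to dispatch $(c) \Rightarrow (b) \Rightarrow (a)$ directly from the definitions and then reduce $(a) \Rightarrow (c)$ to two elementary properties of the weak-$*$ topology on $k$-varifolds over the compact set $N$: sequential compactness of mass-bounded families (Banach--Alaoglu for Radon measures on the Grassmann bundle $G_k(N)$) and lower semicontinuity of the first-variation measure. These are exactly the ``most elementary facts about varifolds'' cited in the introduction.

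The implication $(c) \Rightarrow (b)$ is given by $\phi(t) = ct$. For $(b) \Rightarrow (a)$, any stationary varifold $V$ has $\bmass{V} = 0$ and hence $\mass{V} \le \phi(0) = 0$, so $V = 0$; no nonzero stationary varifold can exist.

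For the main implication $(a) \Rightarrow (c)$, I argue by contradiction. If no such $c$ exists, choose $k$-varifolds $V_j$ in $N$ with $\mass{V_j} > j\,\bmass{V_j}$, and after rescaling arrange $\mass{V_j} = 1$ and $\bmass{V_j} < 1/j$. By weak-$*$ compactness, pass to a subsequence with $V_j \to V$. Because $N$ (hence $G_k(N)$) is compact, the constant function $1$ is an admissible test function, so total mass is continuous under weak convergence and $\mass{V} = \lim \mass{V_j} = 1 > 0$. For any $C^1$ ambient vector field $X$, the integrand $\Div_S X$ is a fixed continuous function on $G_k(N)$, so $\delta V(X) = \lim \delta V_j(X)$; since $|\delta V_j(X)| \le \|X\|_\infty \,\bmass{V_j} \to 0$, we obtain $\delta V(X) = 0$ for every $X$, i.e., $V$ is stationary. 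A nonzero stationary $k$-dimensional varifold in $N$ contradicts $(a)$.

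There is essentially no obstacle once the weak-$*$ framework is in place: one just needs mass to be continuous and first variation to be lower semicontinuous under weak convergence on the compact Grassmann bundle over $N$. Both follow immediately from the definition by pairing with fixed continuous test functions or test vector fields, so the proof is purely formal, which is why the author calls the theorem a trivial consequence of the definitions.
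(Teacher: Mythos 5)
Your proof is correct and follows essentially the same route as the paper: dispose of $(c)\Rightarrow(b)\Rightarrow(a)$ by inspection, then prove $(a)\Rightarrow(c)$ by normalizing a putative counterexample sequence to unit mass and extracting a weak-$*$ limit that is a nonzero stationary varifold. The only (cosmetic) difference is that the paper packages the compactness step as a citation to its appendix Theorem~\ref{CompactnessTheorem} (weak-$*$ compactness, continuity of mass, lower semicontinuity of first variation), whereas you unwind that lemma inline; you also verify stationarity by showing $\delta V(X)=0$ for each test field $X$ directly rather than invoking lower semicontinuity of $\bmass{\cdot}$, which is the same computation stated slightly differently.
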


\begin{proof}[Proof of Theorem~\ref{GeneralCodimension}]
Each statement is a special case of the following statement.   
Hence it suffices to show
that statement~\eqref{General1} implies
 statement~\eqref{General3}.   
 We do this by assuming that statement~\eqref{General3} fails and showing 
 that statement~\eqref{General1} must then also fail.

Failure of statement~\eqref{General3} 
means that there is a sequence $V_i$ of $k$-dimensional varifolds in $N$ with
\begin{equation}\label{ratio}
     \frac{\mass{V_i}}{\bmass{V_i}} \to \infty
\end{equation}
We may assume that $\mass{V_i} \equiv 1$, since otherwise we can replace $V_i$ with
$V_i/|V_i|$.
Note that $\bmass{V_i}\to 0$ by~\eqref{ratio}.
By compactness (Theorem~\ref{CompactnessTheorem}), 
a subsequence of the $V_i$ converges to a limit varifold $V$ with 
  $\mass{V}=1$
and with $\bmass{V}=0$, which violates~\eqref{General1}.
\end{proof}

Fix a $k$ and let $c_N$ be the supremum (possibly infinite) of $\frac{|V|}{|\delta V|}$
among nonzero $k$-dimensional varifolds $V$ in $N$.  Thus $c_N$, if finite, is the
best constant in 
the inequality~\ref{GeneralCodimension}\eqref{General3}.   
The reader may enjoy, as an exercise,
proving that the supremum is attained, and that $c_N$ is an upper semicontinous function of $N$ with respect to the Hausdorff metric on 
the space of compact subsets of the the Riemannian $(n+1)$-manifold
$N^*$ that contains $N$.  (The constant $c_N$ also depends upper semicontinuously
on the riemmanian metric on $N$.)

\begin{corollary}\label{cor:SpecialCase}
Suppose $N$ is a compact subset of a Riemannian manifold and that $N$
contains no nonzero, stationary, $k$-dimensional varifolds.  Then
there is a $c<\infty$ such that 
\begin{equation}\label{e:SpecialCase}
     |M| \le c \left( |\partial M| + \int_M |H|\,dA\right).
\end{equation}
whenever $M\subset N$ is a compact, smoothly immersed $k$-dimensional manifold.
\end{corollary}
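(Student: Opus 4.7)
The plan is to deduce Corollary~\ref{cor:SpecialCase} directly from Theorem~\ref{GeneralCodimension} by translating the smooth, geometric quantities $|M|$, $|\partial M|$, and $\int_M|H|\,dA$ into the varifold quantities $|V|$ and $|\delta V|$.

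First, I would invoke Theorem~\ref{GeneralCodimension}: the hypothesis that $N$ contains no nonzero stationary $k$-dimensional varifold is exactly condition~\eqref{General1}, so we obtain the constant $c<\infty$ for which $|V|\le c\,|\delta V|$ holds for every $k$-dimensional varifold $V$ in $N$. Next, given a compact, smoothly immersed $k$-dimensional submanifold $M\subset N$, I would associate to it the natural integer-multiplicity rectifiable varifold $V=V(M)$, for which clearly $|V|=|M|$.

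The central step is then to bound $|\delta V|$ by $|\partial M|+\int_M|H|\,dA$. This is the standard first variation formula: for any smooth vector field $X$ on the ambient manifold (extended smoothly from a tubular neighborhood of $N$), one has
\[
    \delta V(X) \;=\; -\int_M \langle H,X\rangle\,dA \;+\; \int_{\partial M}\langle X,\nu\rangle\,ds,
\]
where $\nu$ is the outward unit conormal along $\partial M$. Taking the supremum over $X$ with $|X|\le 1$ yields
\[
    |\delta V| \;\le\; \int_M|H|\,dA \;+\; |\partial M|.
\]
Combining this with the inequality $|V|\le c\,|\delta V|$ gives the stated bound.

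The only subtlety is to make sure the first variation formula above indeed controls the total variation measure $|\delta V|$ of the associated varifold in the sense of Theorem~\ref{GeneralCodimension}; this is precisely what is established in the appendix referenced by the paper, and there is nothing more to verify. So there is no genuine obstacle: the corollary is essentially a translation of the varifold inequality into its classical smooth incarnation.
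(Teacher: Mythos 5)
Your proof is correct and follows essentially the same route as the paper: apply Theorem~\ref{GeneralCodimension} to the varifold $V=\vv(M)$ with $|V|=|M|$, then bound $|\delta V|\le|\partial M|+\int_M|H|$ via the first variation formula. The paper simply packages that last estimate as Theorem~\ref{dVTheorem} (noting also that equality holds when $M$ is embedded), whereas you derive it inline; there is no substantive difference.
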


\begin{proof}
The corollary follows immediately because if $V$ is the varifold associated
to $M$, then the left sides of~\ref{GeneralCodimension}\eqref{General3}
and~\eqref{e:SpecialCase} are equal, and the right side 
 of~\ref{GeneralCodimension}\eqref{General3} is less than or equal
 to the right side of~\eqref{e:SpecialCase}.
   (See Theorem~\ref{dVTheorem}.  
   If $M$ is embedded, then the right sides of~\ref{GeneralCodimension}\eqref{General3}
   and~\eqref{e:SpecialCase} are equal.)
\end{proof}

\begin{proof}[Proof of Theorem \ref{HypersurfaceCase}]
Each of the statements
  \eqref{hypersurface1}--\eqref{hypersurface4}
 is a special case of the following statement, so to prove
 their equivalence, it suffices to assume that statement~\eqref{hypersurface4}
fails and show that statement~\eqref{hypersurface1} must then also fail.   
 By Theorem~\ref{GeneralCodimension}, $N$ must contain a nonzero
$n$-dimensional stationary varifold $V$.  We must show that $N$ also contains
a  minimal embedded hypersurface with small singular set 
(in particular, with empty singular set if $n<7$).   That implication,
as well as the last assertion of
Theorem~\ref{HypersurfaceCase}, is given by
the following theorem.
\end{proof}

\begin{theorem}\label{FlowTheorem}
Suppose that $N$ is a compact, 
connected, mean-convex $(n+1)$-dimensional Riemannian manifold with smooth,
nonempty boundary, and that no connected component of $\partial N$ is a minimal surface.
 Suppose also that $N$ contains a nonzero
stationary $n$-dimensional varifold.   Then the interior of $N$ contains a closed minimal hypersurface 
 $M$ 
such that
\begin{enumerate}[\upshape (i)]
 \item\label{flow1} $M$ is a smooth embedded submanifold except
     for a closed singular set of Hausdorff dimension at 
  most $n-7$,
  \item\label{flowstable} the smooth part of $M$ is stable, and 
        each one-sided connected component of the smooth part of $M$
        has a stable, two-sided double cover.
  \item\label{invariant} $M$ is invariant under the isometry group of $N$.
\end{enumerate}
\end{theorem}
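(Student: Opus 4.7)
The plan is to run mean curvature flow starting from the boundary $\partial N$, using the two facts from mean curvature flow highlighted in the introduction: the avoidance principle for stationary varifolds, and the long-time behavior of mean-convex flow. Because $N$ is mean convex and no component of $\partial N$ is minimal, every component of $\partial N$ has at least one point of strictly positive mean curvature, so the weak (level-set / Brakke) mean curvature flow of $\partial N$ moves strictly into the interior of $N$ for small $t>0$. Let $\{N_t\}_{t\ge0}$ be the decreasing family of closed subsets swept out, with $N_0=N$.

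The first step is to rule out extinction in finite time. Let $V$ be the nonzero stationary $n$-dimensional varifold in $N$ given by the hypothesis. By fact (1), the flow cannot cross $V$: the support of $V$ remains inside $N_t$ for every $t\ge0$. Since $\spt V$ has positive $n$-dimensional measure, $N_t$ is nonempty for all $t$, so the flow does not become extinct. By fact (2), the flow therefore converges as $t\to\infty$ to a closed minimal hypersurface $M$ which is smooth except on a singular set of Hausdorff dimension at most $n-7$. This gives~\eqref{flow1}.

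For~\eqref{flowstable}, note that $M$ bounds (or lies in the boundary of) the limit region $N_\infty=\bigcap_{t\ge0}N_t$, and for large $t$ the smooth hypersurfaces $\partial N_t$ approach $M$ from one side with nonnegative mean curvature pointing toward $M$. A classical argument then shows that the smooth two-sided part of $M$ is stable: the function giving the normal speed of the approaching foliation is a positive Jacobi function on that part, which by a maximum-principle/eigenvalue argument forces the first Jacobi eigenvalue to be nonnegative. For a one-sided smooth component $M_0$, this positive Jacobi function is defined on the two-sided double cover $\widetilde M_0 \to M_0$ (where the ambient foliation lifts unambiguously), giving stability of $\widetilde M_0$. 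Finally, part~\eqref{invariant} follows from uniqueness of the level-set flow: any isometry $\varphi$ of $N$ fixes $\partial N$ setwise, so it maps the flow $\{N_t\}$ to itself; hence $N_\infty$ and $M=\partial N_\infty$ are invariant under the isometry group.

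The main obstacle is step two — verifying stability and the one-sided double-cover statement rigorously from the weak convergence of mean-convex flow. One must upgrade varifold/Hausdorff convergence of $\partial N_t \to M$ to smooth graphical convergence on the regular part (using Brakke-type regularity together with the Sheeting/Regularity theorem for stable limits), and then interpret the limiting normal speed as a positive Jacobi field on $M$ or its double cover, being careful at points where several sheets of $\partial N_t$ coalesce onto a one-sided component of $M$.
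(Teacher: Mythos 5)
Your overall strategy matches the paper's: flow $\partial N$ inward by (weak) mean curvature flow, use the avoidance principle to confine $\spt\|V\|$ inside the shrinking regions, extract a minimal limit from the long-time behavior of mean-convex flow, and read off stability from the one-sided foliating approach and invariance from uniqueness of the level-set flow. However, there is a genuine gap at the very first step: to invoke the avoidance principle (Theorem~\ref{AvoidanceTheorem}) you need $\partial N$ and $\spt\|V\|$ to be \emph{disjoint at time zero}, and you simply assert that the flow "cannot cross $V$" without ever establishing this. A nonzero stationary $n$-varifold in a compact mean-convex $N$ could a priori have its support touch $\partial N$, in which case the avoidance principle says nothing. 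The paper handles this by applying the strong maximum principle (Theorem~\ref{MaximumPrinciple}): if $\spt\|V\|$ touched $\partial N$, it would have to contain an entire connected component of $\partial N$, forcing that component to be a smooth minimal hypersurface — contradicting the hypothesis that no component of $\partial N$ is minimal. This is exactly where the hypothesis "no component of $\partial N$ is a minimal surface" earns its keep a second time (beyond ensuring the flow actually moves inward), and your proposal does not use it in this way.

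Two smaller points. First, your assertion that "$\spt V$ has positive $n$-dimensional measure" is neither needed nor obviously true for an arbitrary varifold; what is used is simply that $\spt\|V\|$ is a nonempty compact set (since $V\ne 0$), so the nested intersection $K=\bigcap_t K_t$ is nonempty. Second, your treatment of the one-sided/double-cover issue and stability is sketched but essentially correct in spirit; the paper discharges this to the structure theory of mean-convex flow in \cite{WhiteMeanConvex1} (in particular the dichotomy $K$ has nonempty interior vs.\ $K$ has empty interior, and the one-sided minimizing property in \cite{WhiteMeanConvex1}*{3.5}), so you should cite those facts rather than re-derive them via a Jacobi-field argument, which as you note requires nontrivial regularity input to make rigorous.
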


\begin{proof}
Let $V$ be the stationary varifold.
If  $\spt\|V\|$ (the spatial support of $V$) touched $\partial N$, then by the strong maximum principle (Theorem~\ref{MaximumPrinciple}),
$\spt\|V\|$ would contain an entire connected component of $\partial N$, and that component would 
have to be a smooth minimal surface, contrary to the hypotheses. 
Thus $\spt\|V\|$ does not touch $\partial N$.

Now we use properties of the mean curvature flow proved in 
\cite{WhiteMeanConvex1}*{\S 11}.
Let $M_t$ ($t\ge 0$) be the mean curvature flow with $M_0=\partial N$.  
Let $K_t$ be the closed region bounded by $M_t$.
Since $K_0=N$ is mean convex and since no component of $\partial K_0$ is minimal, the $K_t$ are nested and mean convex.
Furthermore, the mean curvature does not vanish at any regular point of $M_t$
for $t>0$.

Since $V$ is a stationary varifold, and since $M_t$ 
and $\spt\|V\|$ are disjoint at time $t=0$, they must be
disjoint at all times  by the avoidance principle for mean curvature flow
(see Theorem~\ref{AvoidanceTheorem}).
That is, $\spt\|V\| \subset K_t$ for all $t$.  Consequently, $K:=\cap_tK_t$ is nonempty.  

For simplicity, assume that $K$ is connected. (Otherwise argue
as below for each of the connected components of $K$.)
According to~\cite{WhiteMeanConvex1}*{\S11}, $\partial K$ is an embedded minimal hypersurface with
singular set of dimension at most $n-7$.
Furthermore, 
the $M_t$ converge to a limit $\tilde M$, where $\tilde M=\partial K$ if $K$ has nonempty interior, and
    $\tilde M$ is a double cover of $\partial K =K$ if 
 $K$ has empty interior.   In either case, the convergence
is smooth away from the singular set of $\partial K$.  The $M_t$ are two-sided (orient the normal bundle by the mean curvature vector), and therefore $\tilde M$ must also be two-sided.

Furthermore, since the $M_t$ have nonzero mean curvature vector pointing
toward $M$ and since they foliate
one side of $\tilde M$ (away from the singular set of $M$), $\tilde M$ must be stable. 
(Indeed, as explained in \cite{WhiteMeanConvex1}*{3.5}, $\tilde M$ has a one-sided minimizing property
that is stronger than stability.)

Invariance of $K$ and therefore of $M=\partial K$ under the isometry group of $N$ is 
clear from the construction.
\end{proof}

\begin{remark}
One can also prove Theorem~\ref{FlowTheorem} without using mean curvature flow.   Roughly speaking,
one obtains $M$ by minimizing area among hypersurfaces that enclose $\spt\|V\|$.   
More precisely, one first shows that for any sufficiently small $\delta>0$, there is a open set
$U= U_\delta$ containing $\spt\|V\|$ that minimizes 
  $|\partial U| - \delta |U|$ among all such open sets.   
(Some work is required to show that in a minimizing sequence of open sets, the boundaries
can be kept away from $\spt\|V\|$.)
  Such a surface is smooth except for
  a singular set of dimension at most $n-7$.   One then gets $M$ as a subsequential  limit 
  as $\delta\to 0$ of the varifolds associated to the $\partial U_\delta$.
  
  To get a $G$-invariant $M$ (where $G$ is the isometry group of $N$),
  one considers only those sets $U$ that are $G$-invariant.
\end{remark}

\begin{remark}
In Theorem~\ref{HypersurfaceCase}, we assumed that $\partial N$ was nonempty.
In the case $\partial N=\emptyset$, 
Schoen and Simon~\cite{SchoenSimonStable}, using the work of Pitts~\cite{PittsBook},
proved that $N$ must contain a closed, embedded
hypersurface with singular set of dimension at most $n-7$.
Thus the statements~\eqref{hypersurface1}--\eqref{hypersurface4}
in Theorem~\ref{HypersurfaceCase} 
(as modified in Remark~\ref{SingularRemark}) all fail for such $N$.
\end{remark}

\begin{remark}\label{OpenQuestion}
It would be interesting either to prove an analog of Theorem~\ref{FlowTheorem}
for surfaces of codimension $>1$, or to construct counterexamples.  In particular, 
let $N$ be a compact, $k$-convex Riemannian manifold $N$.  
(We say that $N$ is $k$-convex if  $\kk_1+\dots +\kk_k\ge 0$ at each boundary point,
where $\kk_1\le \kk_2\le \dots \le \kk_n$ are the principal curvatures
of $\partial N$ with respect to the inward unit normal.)
 Suppose  that $N$ contains
a nonzero stationary $k$-varifold.   Must $N$ then also contain a stationary
$k$-varifold with some additional properties?
  For example, must it contain an {\em integral} stationary 
$k$-dimensional varifold?  If so, must it contain a closed $k$-dimensional
minimal submanifold with a small singular set?   A positive answer would imply
an extension of the main theorem, Theorem~\ref{HypersurfaceCase}, from hypersurfaces to $k$-dimensional
surfaces.
\end{remark}

\section{Examples}\label{examples}

\begin{theorem}\label{RicciTheorem}
Suppose that $N$ is a compact, connected, mean-convex Riemannian
manifold with smooth, nonempty boundary, and that no connected component
of $\partial N$ is a minimal surface.
Suppose also that the dimension of $N$ is at most $7$ and that
the Ricci curvature of $N$ is everywhere positive.
Then the isoperimetric inequalities listed in Theorem~\ref{HypersurfaceCase} hold.

More generally, if $N$ has nonnegative Ricci curvature, then those isoperimetric
inequalities hold unless $N$ contains a closed, embedded, totally geodesic
hypersurface $M$ such that $\text{Ric}(\nu,\nu)=0$ for every unit normal $\nu$ to $M$.
\end{theorem}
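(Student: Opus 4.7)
The plan is to prove the contrapositive, using Theorem~\ref{FlowTheorem} together with the classical stability inequality for two-sided minimal hypersurfaces. Suppose the isoperimetric inequalities of Theorem~\ref{HypersurfaceCase} fail on $N$. Since $\dim N\le 7$ (so $n\le 6$) and no connected component of $\partial N$ is minimal, the ``furthermore\dots'' clause of Theorem~\ref{HypersurfaceCase}, delivered by Theorem~\ref{FlowTheorem}, produces a smooth, closed, embedded, stable minimal hypersurface $M$ in the interior of $N$: either two-sided, or one-sided with a stable two-sided double cover $\tilde M$. In the one-sided case I would replace $M$ by $\tilde M$ equipped with the pulled-back metric and second fundamental form; stability, closedness, minimality, and the pointwise value of $\mathrm{Ric}^N(\nu,\nu)$ all carry over, since these conditions are purely local.

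Next I would plug the constant test function $\phi\equiv 1$ into the stability inequality. Because $M$ (or $\tilde M$) is closed and two-sided, $\phi$ is admissible, and the second variation formula gives
\[
  \int_M \bigl(|A|^2 + \mathrm{Ric}^N(\nu,\nu)\bigr)\,dA \;\le\; \int_M |\nabla \phi|^2\,dA \;=\; 0,
\]
where $A$ is the second fundamental form of $M$ and $\nu$ is a unit normal. If $\mathrm{Ric}^N>0$ everywhere on $N$, the integrand on the left is strictly positive at every point of $M$, an immediate contradiction. This proves the first assertion.

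Under the weaker hypothesis $\mathrm{Ric}^N\ge 0$, the same display forces $|A|\equiv 0$ and $\mathrm{Ric}^N(\nu,\nu)\equiv 0$ pointwise on $M$, so $M$ is totally geodesic in $N$ and $\mathrm{Ric}^N$ vanishes in every unit normal direction. When $M$ is one-sided, these pointwise properties descend from $\tilde M$ to $M$, yielding exactly the exceptional configuration in the statement of the theorem.

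The only delicate point is the bookkeeping around the one-sided case: the stability inequality in the form used above requires a globally defined unit normal, which is exactly why Theorem~\ref{FlowTheorem} was arranged to supply a two-sided double cover whenever $M$ itself is one-sided. Beyond that, the argument is nothing more than the classical observation that positive Ricci curvature forbids closed stable minimal hypersurfaces, combined with Theorem~\ref{FlowTheorem} to produce the hypersurface in the first place.
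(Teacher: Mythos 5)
Your proposal is correct and follows essentially the same route as the paper: invoke the ``furthermore'' clause of Theorem~\ref{HypersurfaceCase} (via Theorem~\ref{FlowTheorem}) to obtain a closed, stable, two-sided minimal hypersurface, then substitute $f\equiv 1$ into the stability inequality to derive a contradiction (positive Ricci) or to force $|A|\equiv 0$ and $\mathrm{Ric}(\nu,\nu)\equiv 0$ (nonnegative Ricci). The one noteworthy difference is that you explicitly track the one-sided case: the paper simply says $N$ ``contains a smooth, stable, two-sided minimal hypersurface $M$'' and concludes, whereas the theorem's conclusion asserts the existence of an \emph{embedded} totally geodesic hypersurface; when the two-sided object supplied by Theorem~\ref{FlowTheorem} is the double cover $\tilde M$ of an embedded one-sided $M$, one must (as you do) push the pointwise conclusions $|A|\equiv 0$ and $\mathrm{Ric}(\nu,\nu)\equiv 0$ down from $\tilde M$ to $M$. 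This is a genuinely helpful clarification of a point the paper leaves implicit, but it does not change the substance of the argument.
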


\begin{proof}
Suppose $N$ has positive Ricci curvature and that those isoperimetric inequalities fail.
Then by Theorem~\ref{HypersurfaceCase}, $N$ contains a 
smooth, stable, two-sided minimal hypersurface $M$.
As is well-known~\cite{SchoenSurvey}*{\S5}, such a surface is incompatible with
positive Ricci curvature for the following reason.
The stability of $M$ means that
\begin{equation}\label{stability}
   \int_M \left ( \text{Ric}(\nu,\nu) + |A|^2\right) |f|^2  \le \int_M |\nabla f|^2
\end{equation}
for all smooth functions $f:M \to \RR$, where $\nu$ is a 
unit normal vectorfield to $M$
and $A$ is the second fundamental form of $M$.
But if we set $f\equiv 1$, the left side of~\eqref{stability} is positive and the right side is $0$, a contradiction.

If $N$ has nonnegative curvature and if the isoperimetric inequalities fail, then (by letting $f\equiv 1$
in~\eqref{stability}) we see that $|A|\equiv 0$ (i.e., that $M$ is totally geodesic)
and that $\text{Ric}(\nu,\nu)\equiv 0$.
\end{proof}

In recent years there have been a number of investigations of minimal
surfaces in ambient spaces of the form $M\times \RR$.  
(See for example 
\cite{HoffmanLiraRosenberg},
\cite{Rosenberg2002},
 \cite{NelliRosenberg}, 
 \cite{MeeksRosenberg2004}, and
 \cite{MeeksRosenberg2005}.)
 Note that $M\times \RR$ is foliated by the minimal surfaces $M\times \{z\}$.
 Using Theorem~\ref{GeneralCodimension}, 
we can prove isoperimetric inequalities in very general compact subsets
of ambient spaces admitting such foliations:

\begin{theorem}\label{ProductTheorem}
Let $N^*$ be an $(n+1)$-dimensional Riemannian manifold.
Let $f:N^*\to \RR$ be a smooth function with nowhere vanishing gradient such that
the level sets of $f$ are minimal hypersurfaces or, more generally, such that 
the sublevel sets $\{ x : f(x)\le z \}$ are mean convex.
Let $N$ be a compact subset of $N^*$ such that
for each $z\in \RR$, no connected component of $f^{-1}(z)$ is a minimal hypersurface
lying in entirely in $N$. 
 Then the  isoperimetric 
inequalities in Theorem~\ref{GeneralCodimension}
and in Corollary~\ref{cor:SpecialCase} hold for $k=n$.
\end{theorem}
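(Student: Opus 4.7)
The plan is to verify condition~\eqref{General1} of Theorem~\ref{GeneralCodimension} for the compact set $N$---namely, that $N$ contains no nonzero stationary $n$-dimensional varifold---after which Theorem~\ref{GeneralCodimension} and Corollary~\ref{cor:SpecialCase} immediately yield both asserted isoperimetric inequalities.

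So suppose for contradiction that $V$ is a nonzero stationary $n$-varifold in $N$. Since $\spt\|V\|$ is a nonempty compact subset of $N$, the function $f$ attains a maximum value $z_0$ on $\spt\|V\|$ at some point $x_0$. Then $\spt\|V\|\subset\{f\le z_0\}$, and this sublevel set is mean convex by hypothesis, with smooth boundary $f^{-1}(z_0)$ (smooth because $\nabla f$ is nowhere zero). Since $V$ is stationary and touches this mean-convex boundary at $x_0$ from the interior side, the strong maximum principle (Theorem~\ref{MaximumPrinciple}), invoked exactly as in the proof of Theorem~\ref{FlowTheorem}, forces $\spt\|V\|$ to contain a neighborhood of $x_0$ within $f^{-1}(z_0)$ and forces that neighborhood to be minimal as a hypersurface.

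Next, an open/closed argument propagates this locally to an entire component. Let $C$ be the connected component of $f^{-1}(z_0)$ through $x_0$ and let $E\subset C$ denote the set of points at which $\spt\|V\|$ contains a $C$-open neighborhood. The set $E$ is open in $C$ by definition and nonempty by the previous step; it is also closed, because any limit point $y\in C$ of $E$ lies in $\spt\|V\|$ (which is closed) and satisfies $f(y)=z_0$, so $y$ is again a maximum of $f$ on $\spt\|V\|$ and the maximum principle can be reapplied at $y$. Hence $E=C$, so $C\subset\spt\|V\|\subset N$ and $C$ is a minimal hypersurface---contradicting the hypothesis that no connected component of any level set $f^{-1}(z)$ lying entirely inside $N$ is minimal.

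I expect the only delicate point to be this local-to-global propagation, in which one must verify that the strong maximum principle is a genuinely pointwise statement so that the open/closed argument survives even when $C$ is noncompact in $N^*$. Once that is settled, the rest of the proof is a formal invocation of Theorem~\ref{GeneralCodimension} and Corollary~\ref{cor:SpecialCase}.
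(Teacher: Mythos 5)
Your proof is correct and follows essentially the same approach as the paper: reduce via Theorem~\ref{GeneralCodimension} to showing $N$ contains no nonzero stationary $n$-varifold, then take the maximum of $f$ on $\spt\|V\|$ and invoke the strong maximum principle at that level set to reach a contradiction. The only difference is that your open/closed propagation step is unnecessary, since Theorem~\ref{MaximumPrinciple} as stated is already global for a connected $M$ (its conclusion is that $S$ contains \emph{all} of $M$), so the paper applies it once to the whole connected component of $f^{-1}(z_0)$ rather than pointwise.
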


Of course if $N$ does contain a connected component $M$ of $f^{-1}(z)$ that
is a minimal hypersurface, then that component must be a compact minimal
hypersurface without boundary, and thus
all of the isoperimetric inequalities fail.   (They all fail for $M$.)

\begin{proof}
Although this theorem is about hypersurfaces, we cannot
apply Theorem~\ref{HypersurfaceCase} because
we are not making any assumptions about $\partial N$. 
However, by Theorem~\ref{GeneralCodimension}, it suffices
to show that $N$ contains no nonzero stationary $n$-varifolds.

Suppose to the contrary that $V$ is such a varifold.
Since $N$ is compact, $\spt\|V\|$ (the spatial
 support of $V$) is compact,
and thus the function $f$ has a maximum value
$z$ on $\spt\|V\|$.  Hence $\spt\|V\|$ touches but lies on the $\{f\le z\}$ side of the smooth
minimal hypersurface $M=f^{-1}(z)$.  
By the strong maximum principle (Theorem~\ref{MaximumPrinciple}), $\spt\|V\|$
must contain an entire component of  $M$ and that component must be a minimal hypersurface.
But by hypothesis, $N$ does not contain any such component.
\end{proof}

\section{Minimal Disks in a $3$-Manifold}\label{disks}

Suppose $N$ is a compact Riemannian $3$-manifold with smooth, mean-convex
boundary.   Often embedded minimal disks in $N$ with boundary curves in $\partial N$ are
of particular interest.   Could $N$ admit isoperimetric inequalities for such disks even
if it did not admit isoperimetric inequalities for other kinds of minimal surfaces?
The answer is, in general, no:

\begin{theorem}\label{DiskTheorem}
Suppose $N$ is a compact, mean convex Riemannian $3$-manifold whose
boundary is a sphere with nowhere vanishing mean curvature.
Then one and only one of the following holds:
\begin{enumerate}[\upshape (a)]
\item\label{TopologicalBall}
$N$ is diffeomorphic to a ball, and the isoperimetric inequalities listed in Theorem~\ref{HypersurfaceCase} hold.
\item\label{BigDisks}
There is a compact family $\mathcal{C}$ of smooth embedded curves in $\Sigma$ such
that
\begin{equation*}
       \sup \area(D) = \infty,
\end{equation*}
the $\sup$ being over all smooth, embedded minimal disks $D$ in $N$ with 
   $\partial D\in \mathcal{C}$.
 \end{enumerate}
\end{theorem}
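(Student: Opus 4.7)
Mutual exclusivity of (a) and (b) is direct: in case (a), Theorem \ref{HypersurfaceCase}\eqref{hypersurface3} with $H \equiv 0$ gives $|D| \le c|\partial D|$ for every minimal disk $D$ in $N$, and a compact family $\mathcal{C}$ of smooth embedded curves has uniformly bounded length, so the areas of such disks are uniformly bounded. The substance of the theorem is therefore the converse: assuming (a) fails, I will construct a family $\mathcal{C}$ as in (b).

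Failure of (a) splits into two cases. If the isoperimetric inequalities of Theorem \ref{HypersurfaceCase} fail, then because $\partial N \cong S^2$ has strictly positive mean curvature (so no component of $\partial N$ is minimal), the ``furthermore'' assertion of Theorem \ref{HypersurfaceCase} applies, producing a smooth, stable, two-sided closed embedded minimal surface $M$ in the interior of $N$ (or the double cover of such). If instead $N$ is not a ball, then by the Poincar\'e conjecture (a simply connected compact $3$-manifold with $S^2$ boundary is a ball) one has $\pi_1(N) \ne 0$; the Meeks--Simon--Yau theorem (when $\pi_2(N) \ne 0$) or a Simon--Smith--Ketover min-max construction again yields a closed embedded minimal surface $M$ in the interior of $N$.

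Given $M$, I would construct $\mathcal{C}$ by exploiting the topology of $N \setminus M$. A simple closed curve $\gamma \subset \partial N$ bounds disks in $N$ in infinitely many homotopy classes, since two such fillings fit together to form a $2$-sphere in $N$, and among these $2$-spheres $M$ (or an iterated copy of $M$) represents nontrivial elements in the relative $\pi_2$ (or, when $\pi_1(N) \ne 0$, wrapping around a nontrivial loop produces new classes). In each such class, Meeks--Yau's embedded disk theorem provides a least-area embedded minimal disk with boundary $\gamma$, and classes differing by a copy of $[M]$ have minimizing areas differing by at least $|M|$. Taking $\mathcal{C} = \{\gamma\}$ (a trivially compact one-element family) would thus yield embedded minimal disks with boundary $\gamma$ and arbitrarily large areas.

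The principal obstacle is ensuring that the homotopy classes produced in the last step are genuinely realized by \emph{embedded} minimal disks of unbounded area. In the case that $N$ is a ball with an interior minimal $2$-sphere $M$, the complement $N \setminus M$ is an annular region homeomorphic to $S^2 \times I$, whose embedded-disk topology is constrained; a naive single-curve family may not suffice, and one may have to enlarge $\mathcal{C}$ to a compact family of curves admitting more complicated embedded fillings---for instance curves obtained by pushing small loops on $\partial N$ along an arc from $\partial N$ to $M$, so that successive fillings spiral around $M$. Making this refinement precise, and verifying Meeks--Yau embeddedness together with genuine area blow-up across the resulting classes, is the crux of the argument.
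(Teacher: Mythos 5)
Your first two steps are correct and match the paper: \eqref{BigDisks} trivially contradicts the isoperimetric inequality, and failure of~\eqref{TopologicalBall} reduces (via Theorem~\ref{HypersurfaceCase}) to the dichotomy ``$N$ is not diffeomorphic to a ball, or $N$ contains a smooth closed minimal surface.'' At that point, however, the paper does not attempt a homotopy-theoretic construction; it simply invokes~\cite{WhiteNewApplications}*{\S3}, where a degree-theory argument shows directly that either alternative forces~\eqref{BigDisks}. The paper's proof is therefore essentially a two-line reduction plus a citation, whereas you are trying to reconstruct the cited result from scratch.

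Your reconstruction has a genuine gap, and it is exactly the one you flag. The homotopy-class argument (two fillings of $\gamma$ glue to a sphere; different classes give different minimal disks of increasing area) requires an infinite supply of relative homotopy classes, which means $\pi_2$ of $N$ or of $N\setminus M$ must be rich enough. In the critical case where $N$ is a ball containing an embedded minimal $2$-sphere $M$ in its interior, $\pi_2(N)=0$ and $N\setminus M$ is a punctured ball plus an $S^2\times I$; a single curve $\gamma\subset\partial N$ bounds only one isotopy class of embedded disk in such an $N$, so taking $\mathcal C=\{\gamma\}$ cannot work, and the Meeks--Yau machinery produces just one least-area embedded disk. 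Your proposed fix---enlarging $\mathcal C$ to a family of curves obtained by pushing loops along an arc from $\partial N$ to $M$ so that fillings ``spiral around $M$''---is the right intuition but is not a proof: you would still need to exhibit a \emph{compact} family $\mathcal C$, show each curve in it is embedded and lies in $\partial N$, and establish that the resulting embedded minimal disks (not merely their homotopy representatives) have areas tending to infinity. That last step is precisely what the degree-theoretic argument of~\cite{WhiteNewApplications}*{\S3} supplies: one sweeps $\partial N$ by a compact family of curves, and a parity/degree obstruction forces some curve in the family to bound embedded minimal disks of arbitrarily large area whenever $N$ is not a ball or contains an interior closed minimal surface. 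Without that (or an equivalent) ingredient, your argument does not close. You have correctly identified the crux; you just haven't filled it in, and the homotopy-group approach as stated cannot fill it in the ball-plus-minimal-sphere case.
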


Note that  \eqref{BigDisks} implies that there is {\it no} bound
of the form
\begin{equation}\label{AnyBound}
     |D| \le \phi\left( |\partial D| +  \|\partial D\|_{C^k} + 
      \sup_{x,y\in \partial D}\frac{\dist_{\partial D}(x,y)}{\dist_N(x,y)}
      \right)
\end{equation}
for embedded minimal disks $D$ with $\partial D\subset \partial N$.
In other words, there is no $k$ and no increasing function $\phi:\RR\to \RR$
for which~\eqref{AnyBound} holds.

\begin{proof}  Clearly~\eqref{BigDisks} violates the isoperimetric inequalities, so 
\eqref{BigDisks} implies failure of~\eqref{TopologicalBall}.

Now suppose that~\eqref{TopologicalBall} fails.
If the isoperimetric inequalities fail, then $N$
contains a smooth, closed minimal surface by 
Theorem~\ref{HypersurfaceCase}.
Thus either $N$ is not diffeomorphic to a ball, or
$N$ contains a closed minimal surface.  
According to \cite{WhiteNewApplications}*{\S3}, in either case
we have~\eqref{BigDisks}.
\end{proof}

\section{Nonsmooth Boundaries}\label{nonsmooth}

In Theorems~\ref{HypersurfaceCase}, 
 \ref{FlowTheorem}, and~\ref{RicciTheorem},
 $\partial N$ was assumed
to be smooth.  
Sometimes it is convenient to work in domains $N$ with boundaries
that are only piecewise smooth.  In fact, the theorems are true under
very mild boundary regularity and mean convexity assumptions.

Suppose $N$ is an arbitrary compact, connected subset of an Riemannian 
 $(n+1)$-manifold.  
Examination of the proofs of 
  Theorems~\ref{HypersurfaceCase}, 
 \ref{FlowTheorem}, and~\ref{RicciTheorem}
show that they remain valid for $N$ provided the 
$N$ has the following two properties:
\begin{enumerate}
  \item\label{Property1}
   If $V$ is a stationary $n$-varifold in $N$, then $\spt\|V\|$
   is contained in the interior of $N$.
  \item\label{Property2}
   If $X$ is a compact subset of the interior of $N$, then there is
  a strictly mean convex set $K_0$ with smooth boundary such that $X$ is contained
  in the interior of $K_0$.  (One uses $\partial K_0$ as the      
  initial surface  for the 
  mean curvature flow in the proof of Theorem~\ref{FlowTheorem}, 
  with $X=\spt\|V\|$.)
\end{enumerate}

Suppose, for example, that $\partial N$ is the a union of smooth
$n$-manifolds-with-boundary that meet in pairs along common edges
at interior angles that are everywhere strictly between $0$ and $\pi$.   
Suppose also that no connected component of $\partial N$ is a smooth
minimal surface.
 Then $N$ has 
 properties~\eqref{Property1} and~\eqref{Property2},
 and therefore 
 Theorems~\ref{HypersurfaceCase}, 
 \ref{FlowTheorem}, and~\ref{RicciTheorem}
 hold for $N$.
Property (1) follows easily from the 
maximum principle~\ref{MaximumPrinciple}.
The $K_0$ of property (2) is obtained by rounding off the corners
of $N$.  (The rounding may be accomplished as follows.
Let $K^*$
be the union of all closed geodesic balls in $N$ of radius $\eps$.
For small enough $\eps$, the boundary of $K^*$ will be $C^{1,1}$,
and under mean curvature flow it will immediately move into the
interior of $N$ and be smooth with everywhere positive mean curvature.)

    
\section{Nonlinear Inequalities}\label{NonlinearSection}

We now consider isoperimetric inequalities of the form
\begin{equation}\label{nonlinear}
    |M|^{1 - 1/k} \le c \left( |\partial M| + \int_M |H|\,dA \right)
\end{equation}
for $k$-dimensional surfaces $M$, or the corresponding inequalities
\begin{equation}\label{nonlinearvarifold}
   |V|^{1 - 1/k} \le c\, |\delta V|
\end{equation}
for $k$-varifolds $V$.

First we observe that that the inequality~\eqref{nonlinearvarifold} can never be valid in any ambient
manifold (even Euclidean space) 
 if we allow arbitrary varifolds.
For suppose $V=V_\eps$ is $\eps$ times the varifold associated
to a smooth $k$-manifold in $N$.
Then the left and right sides of~\eqref{nonlinearvarifold}  are proportional to $\eps^{1-1/k}$
and $\eps$, respectively, so the inequality necessarily fails for small $\eps$.

On the other hand, Allard proved that~\eqref{nonlinearvarifold} is true in Euclidean space if we require that $V$ be a finite-mass
integer-multiplicity rectifiable $k$-varifold or, more generally, that $V$ be a finite-mass
rectifiable $k$-varifold with density $\ge 1$ almost everywhere:
see~\cite{AllardFirstVariation}*{7.1} or~\cite{SimonBook}*{18.6}.
In particular, we have~\eqref{nonlinear} for any $k$-manifold $M$ with $|M|$ finite.

For general ambient manifolds we have the following theorem

\begin{theorem}\label{NonlinearTheorem}
Suppose $N$ is a compact region in a Riemannian manifold.  Then there are constants
$\alpha>0$ (depending on $N$) and $c'=c'_k<\infty$ (depending only on~$k$) such that if $M$ is a smooth $m$-dimensional
surface in $N$
with $|M|\le \alpha$, then
\begin{equation}\label{nonlinearagain}
   |M|^{1 - 1/k} \le c'\left( |\partial M| + \int_M | H | \right).
\end{equation}
More generally, if $V$ is a rectifiable $k$-varifold in $N$ 
with density $\ge 1$ almost everywhere and with $|V|\le \alpha$, then
\begin{equation}\label{nonlinearvarifoldagain}
  |V|^{1 - 1/k} \le c' | \delta V|.
\end{equation}
\end{theorem}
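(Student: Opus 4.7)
The plan is to reduce~\eqref{nonlinearvarifoldagain} to Allard's Euclidean nonlinear isoperimetric inequality by isometrically embedding the ambient Riemannian manifold into some $\RR^L$, comparing the first variations, and absorbing the resulting extrinsic-curvature error term into the left-hand side under the smallness hypothesis on $\mass{V}$. Concretely, I would fix a compact smooth Riemannian manifold $\tilde N$ containing $N$ in its interior and apply the Nash embedding theorem to get an isometric embedding $\iota\colon \tilde N \hookrightarrow \RR^L$. Let $A$ denote the second fundamental form of $\iota(\tilde N)$ in $\RR^L$, and set $C_N := k \sup_{\tilde N}|A|$, which is finite by compactness.

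A rectifiable $k$-varifold $V$ in $N$ with density $\ge 1$ pushes forward to a rectifiable $k$-varifold of the same mass and density in $\RR^L$. For a smooth compactly supported vector field $X$ on $\RR^L$, decompose $X = X^T + X^\perp$ along $\iota(\tilde N)$. A Gauss--Weingarten computation shows that at any $p \in \tilde N$ and any $k$-plane $S \subset T_p \tilde N$ with orthonormal basis $\{e_i\}$,
\[
  \Div_S^{\RR^L} X \;=\; \Div_S^{\tilde N} X^T \;-\; \Bigl(\sum_{i=1}^k A(e_i,e_i)\Bigr) \cdot X^\perp.
\]
Integrating against $V$ and taking the supremum over smooth $X$ with $|X|\le 1$ yields
\[
  |\delta_{\RR^L} V| \;\le\; \bmass{V} \;+\; C_N \, \mass{V}.
\]

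Next I would invoke Allard's Euclidean nonlinear isoperimetric inequality, whose constant $C(k)$ depends only on $k$ (not on the ambient Euclidean dimension $L$), obtaining
\[
  \mass{V}^{1-1/k} \;\le\; C(k) \, |\delta_{\RR^L} V| \;\le\; C(k)\bigl(\bmass{V} + C_N \mass{V}\bigr).
\]
Rewriting $C(k) C_N \mass{V} = C(k) C_N \mass{V}^{1/k} \cdot \mass{V}^{1-1/k}$ and choosing $\alpha > 0$ so small that $C(k) C_N \alpha^{1/k} \le \frac{1}{2}$, the error absorbs whenever $\mass{V} \le \alpha$, producing~\eqref{nonlinearvarifoldagain} with $c'_k := 2 C(k)$ depending only on $k$. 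The surface version~\eqref{nonlinearagain} then follows immediately by applying~\eqref{nonlinearvarifoldagain} to the varifold associated to $M$ and using $\bmass{V} \le |\partial M| + \int_M |H|\,dA$ from Corollary~\ref{cor:SpecialCase}.

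The one point requiring care is the assertion that Allard's Euclidean constant is genuinely independent of the ambient dimension $L$. This is standard (the proof relies on the Euclidean monotonicity formula and a Besicovitch-type covering argument, both intrinsically $k$-dimensional), but it is essential: without it the Nash-embedding strategy would leave $c'$ depending on $N$ through the embedding dimension $L$, rather than on $k$ alone as the theorem requires.
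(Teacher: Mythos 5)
Your proof is correct and follows essentially the same route as the paper: isometrically embed $N$ into Euclidean space, compare the intrinsic first variation (or mean curvature) with the Euclidean one via the second fundamental form of the embedding, apply Allard's Euclidean inequality, and absorb the $C_N\mass{V}$ error term using the smallness hypothesis $\mass{V}\le\alpha$. The one thing you spell out more carefully than the paper does---that Allard's Euclidean constant depends only on $k$ and not on the ambient dimension $L$, which is needed for $c'_k$ to depend only on $k$---is a worthwhile clarification, but the underlying argument is the same.
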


\begin{proof}  We describe the proof of~\eqref{nonlinearagain}; the proof 
of~\eqref{nonlinearvarifoldagain} is essentially the same.
Embed the Riemannian manifold isometrically in a Euclidean space $E$.   Let $H_e$ be the mean curvature
of $M$ as a submanifold of $E$.  Then
\begin{equation}\label{2ndForm}
    |H_e| \le |H| + K
\end{equation}
holds at each point for some $K$ depending only on the second fundamental form
of $N$ at that point.   In particular, since $N$ is compact there is a constant $K$
(independent of $M$) such that~\eqref{2ndForm} holds at all points.

By the isoperimetric inequality in Euclidean space, we have
have
\[
   |M|^{1-1/k} \le c \left( |\partial M| + \int |H_e|\, dA \right)
             \le c \left( |\partial M| + \int |H|\, dA + K |M| \right).
\]
Thus
\[
   |M|^{1-1/k} ( 1 -  c  K |M|^{1/k} ) 
       \le 
    c_m\left( |\partial M| + \int |H|\, dA  \right).
\]
Thus for $|M|\le \alpha$,
\[
  |M|^{1-1/k} ( 1 - c K \alpha^{1/k} ) 
     \le
    c  \left( |\partial M| + \int |H|\, dA  \right).
\]
Now we let $\alpha =  (2cK)^{-k}$ and $c'= 2c$.
\end{proof}    

\begin{remark}
Theorem~\ref{NonlinearTheorem}
 remains true (with the same proof) for noncompact $N\subset E$ provided the
norm of second fundamental form of $N$ is bounded.
\end{remark}

It remains to consider whether the nonlinear isoperimetric 
inequality~\eqref{nonlinearagain} holds (perhaps with a worse constant)
for $M$ with $|M|\ge \alpha$.   Here we consider the case of $n$-dimensional surfaces
$M$ in a compact, mean-convex Riemannian $(n+1)$-manifold $N$ with smooth (or piecewise smooth)
boundary.   (Exactly the same reasoning applies to rectifiable varifolds with density bounded below by $1$ almost
everywhere.) 
The answer is then simple: a suitable constant $c'$ exists if and only if $N$ contains
no closed minimal hypersurface.  
 The ``only if'' is immediate since a closed minimal hypersurface
would be a counterexample to the inequality.   Thus suppose $N$ contains no closed minimal
hypersurface.  Then by Theorem~\ref{HypersurfaceCase}, we have the linear inequality
\[
    |M| \le c \left( |\partial M| + \int |H|\,dA\right)
\]
for all $M$.   If $|M|\ge \alpha$, then $|M|^{1-1/n}\le \alpha^{-1/n}|M|$, so we get the nonlinear
inequality~\eqref{nonlinearagain} with constant $c' = c\alpha^{-1/n}$.

\section{Appendix: Varifolds}

Let $N$ be a compact subset of a Riemannian $(n+1)$-manifold $N^*$.
Let
\[
  G_k(N) = \{ (p,S): \text{$p\in N$ and $S$ is a $k$-dimensional subspace of $\Tan_p(N^*)$} \}.
\]

\begin{definition}
A $k$-dimensional varifold (or $k$-varifold, for short) in $N$ is a finite Borel measure on $G_k(N)$.
\end{definition}

(This definition and some of the discussion below need to be modified slightly for noncompact $N$.
However, in this paper $N$ is always compact.)

Every $C^1$ embedded, compact $k$-dimensional submanifold $M$ (with or without boundary)
in $N$ determines a varifold $V=\vv(M)$ (called the varifold associated to $M$) by setting
\begin{equation*}
   V(U) = \area \{ p\in M:  (p, \Tan_pM) \in U \}
 \end{equation*}
 for every borel set $U\subset G_k(N)$.
Consequently $k$-varifolds can be regarded as generalized $k$-dimensional surfaces.  

One can also define, in a similar way, a varifold $V=\vv(M)$ associated to
a $C^1$ immersed submanifold with boundary.  Indeed, if 
 $\iota: M \to N$ is an immersion, we let
 \begin{equation*}
     V(U) = \area \{ p\in M: (\iota(p),\iota_\#( \Tan_pM)) \in U \}
  \end{equation*}
  where area is with respect to the induced metric on $M$.

If $V$ is a $k$-varifold, we let $\mass{V}$ denote its mass:
\[
  \mass{V} = V(G_k(N)).
\]
The mass
of $V$ is also written (for reasons that need not concern us here) as $\|V\|(N)$.
Note that if $V=\vv(M)$, then the mass $\mass{V}$ of $V$ is just the area of $M$.  

Since $V$ is a measure on $G_k(N)$, its support is a closed subset
of $G_k(N)$.  If we project that closed set to $N$ by the projection
$(x,S)\mapsto x$, then result
is the {\it spatial support} of $V$, written $\spt \|V\|$.  It is the smallest closed subset $K$ of $N$
such that
\[
     V \{ (x,S) \in G_k(N):  x\notin K\}  = 0.
\]

Now suppose $X$ is a smooth vectorfield on $N^*$.
For a smoothly embedded, compact submanifold $M$, 
one
 has 
\begin{equation}\label{DivergenceTheorem} 
   \int_{M} \Div^MX = \int_{\partial M}  X\cdot \nu - \int_M H\cdot X 
\end{equation}
where $H(p)$ is mean curvature of $M\subset N$ at $p$, where
$\nu(p)\in \Tan_pM$ is the outward pointing unit normal to $\partial M$ at $p$, and
where $\Div^MX(p)$ is the 
divergence of $X$ over $M$ at $p$.  That is,
\[
  \Div^MX(p) = \sum_{i=1}^k (\nabla_{e(i)}X) \cdot e(i)
\]
where $e(1),\dots, e(k)$ is an orthonormal basis for $\Tan_pM$.

(To prove~\eqref{DivergenceTheorem}, one breaks $X$ into tangential and normal 
parts: see the proof of~\cite{SimonBook}*{9.6}.
Incidentally, the quantity~\eqref{DivergenceTheorem} 
occurs in the first variation formula for area: it is equal to the initial rate of change of area
of any one-parameter family of surfaces starting at $M$ and moving with initial velocity~$X$.
See~\cite{SimonBook}*{\S 9}.)

\begin{theorem}\label{EasyVersion}
Suppose that  $M$ is a compact $m$-dimensional surface in Euclidean space.  Then
\begin{equation}\label{LinearInequalityAgain}
    |M| \le \frac{r}m \left( |\partial M| + \int_M |H| \right)
\end{equation}
where $r$ is the radius of the smallest ball containing $M$.
\end{theorem}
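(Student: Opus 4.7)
The plan is to apply the divergence formula \eqref{DivergenceTheorem} to the radial vector field centered at the center of the smallest enclosing ball. Let $x_0$ be the center of a closed ball of radius $r$ containing $M$, and set $X(x) = x - x_0$.

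First I would compute $\Div^M X$. At any point $p \in M$ with orthonormal tangent basis $e(1),\dots,e(m)$, we have $\nabla_{e(i)} X = e(i)$, so $(\nabla_{e(i)}X)\cdot e(i) = 1$ and hence $\Div^M X \equiv m$. Substituting into \eqref{DivergenceTheorem} gives
\[
    m\,|M| = \int_M \Div^M X = \int_{\partial M} X \cdot \nu \;-\; \int_M H \cdot X.
\]

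Next I would bound the right-hand side using the fact that $|X(p)| = |p - x_0| \le r$ for every $p \in M$. The boundary integrand satisfies $|X\cdot \nu| \le |X| \le r$, and the mean-curvature integrand satisfies $|H \cdot X| \le r\,|H|$. Combining,
\[
    m\,|M| \;\le\; r\,|\partial M| + r\int_M |H|,
\]
and dividing by $m$ gives the asserted inequality \eqref{LinearInequalityAgain}.

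There is no real obstacle here; the proof is a one-line application of the first variation formula to the position vector field, exactly as in the classical Michael--Simon style argument. The only small point worth flagging is that $r$ may be taken to be the radius of the \emph{smallest} enclosing ball rather than any enclosing ball, which simply amounts to choosing $x_0$ as the corresponding center; the bound $|X| \le r$ on $M$ is then automatic.
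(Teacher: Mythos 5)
Your proof is correct and is exactly the paper's argument: center the radial vector field $X(x)=x-x_0$ at the center of the smallest enclosing ball, use $\Div^M X\equiv m$, and bound the first-variation formula using $|X|\le r$. You have simply spelled out the estimates that the paper leaves as immediate.
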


\begin{proof}
We may assume that the smallest ball containing $M$ is centered at the origin.
Let $X(x)=x$.  Then $\Div^MX(p) \equiv m$, 
 so~\eqref{LinearInequalityAgain} follows immediately
from~\eqref{DivergenceTheorem}.
\end{proof}

The left side of~\eqref{DivergenceTheorem} can be generalized to an  arbitrary $k$-varifold in $N$ as follows.
We define a linear functional $\delta V$ on the space of smooth tangent vectorfields $X$ on $N^*$ by
\[
  \delta V(X) = \int \Div_S X(p) \, dV(p,S),
\]
the integral being over all $(p,S)\in G_k(N)$.  Here 
\[
  \Div_S X(p) = \sum_{i=1}^k (\nabla_{e(i)}X)\cdot e(i)
\]
where $e(1),\dots, e(k)$ is an orthonormal basis for $S$.

The linear functional $\delta V$ has a norm $|\delta V| \in [0,\infty]$ given by
\[
   \bmass{V} = \sup_X \delta V(X)
\]
where the $\sup$ is over all smooth tangent vectorfields $X$ on $N^*$ 
such that $|X(p)|\le 1$
for all $p$.  Equivalently, $\bmass{V}$ is the smallest number $c \in [0,\infty]$ such that
\[
   \delta V(X) \le c \, \|X\|_0
\]
for every smooth vectorfield $X$, where $\|X\|_0$ is the $C^0$ norm
(i.e., the $\sup$ norm)  of $X$.

(The norm $\bmass{V}$ would be written $\| \delta V\|(N)$
in the notation of \cite{AllardFirstVariation} and~\cite{SimonBook}.)

Although $\delta V(X)$ is finite for every varifold $V$ and $C^1$ vectorfield $X$,
the norm $|\delta V|$ may be infinite.

\begin{theorem}\label{dVTheorem}
 Let $V=\vv(M)$ be the varifold associated
to a smoothly immersed manifold $M$  in $N$.  Then
\begin{equation}\label{ImmersedInequality}
     |\delta V| \le |\partial M| + \int_M|H|.
\end{equation}
If $M$ is embedded, then equality holds.
\end{theorem}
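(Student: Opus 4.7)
The plan is to unpack the definition of $\delta V(X)$ for $V = \vv(M)$ and then invoke the divergence identity~\eqref{DivergenceTheorem}. First, substituting the definition of $V = \vv(M)$ into
\[
  \delta V(X) = \int \Div_S X(p)\,dV(p,S)
\]
turns this into an integral over $M$:
\[
  \delta V(X) = \int_M \Div^M X\,dA,
\]
since the varifold is supported on the graph $\{(\iota(p), \iota_\#(\Tan_p M)) : p\in M\}$ carrying the area measure on $M$. By~\eqref{DivergenceTheorem}, this expression equals $\int_{\partial M} X\cdot\nu - \int_M H\cdot X$. For any $X$ with $\|X\|_0 \le 1$, the pointwise bounds $|X\cdot\nu|\le 1$ and $|H\cdot X|\le |H|$ give $\delta V(X) \le |\partial M| + \int_M|H|$, and taking the supremum over such $X$ proves~\eqref{ImmersedInequality}.

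For the reverse inequality in the embedded case, I would construct test vectorfields $X_\eps$ on $N^*$ with $\|X_\eps\|_0 \le 1$ for which $\delta V(X_\eps)$ tends to $|\partial M| + \int_M|H|$. The idea is to write $X_\eps = X_1 + X_2$ with disjoint supports: take $X_1$ supported in a thin tubular neighborhood of $\partial M$, equal to the outward conormal $\nu$ along $\partial M$ and smoothly cut off away from $\partial M$; take $X_2$ supported in the complementary region, equal to $-H/|H|$ at points of $M$ where $|H|\ge \eps$ and smoothly damped elsewhere. Because $M$ is embedded, each $X_i$ is first defined on (a piece of) $M$ and then extended smoothly to $N^*$ using a tubular neighborhood of $M$ together with a cutoff, preserving the bound $|X_i(p)|\le 1$. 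Disjointness of supports yields $|X_\eps(p)|\le 1$ everywhere. Plugging $X_\eps$ into the identity from the previous paragraph gives $\delta V(X_\eps) \ge |\partial M| + \int_M|H| - O(\eps)$, and letting $\eps\to 0$ produces equality.

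The main obstacle lies in the smooth-extension step for the equality case: the optimal test field is forced to jump across $\partial M$ (from the tangential conormal $\nu$ to the normal direction $-H/|H|$) and is undefined where $H$ vanishes. Both defects are absorbed into an arbitrarily small error by making the collar around $\partial M$ thin and by smoothly damping $-H/|H|$ on the set $\{|H|<\eps\}$, both of which affect $\int_M H\cdot X$ only by $O(\eps)$. For an \emph{immersed} $M$, however, this construction breaks down at self-intersections: two distinct sheets of $M$ passing through the same point of $N^*$ may demand incompatible values of $X$ (different tangent spaces, different conormals, different mean curvature directions), so no single ambient field $X$ can simultaneously nearly saturate the bound on every sheet. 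This structural asymmetry is precisely why only the inequality~\eqref{ImmersedInequality}, and not equality, is asserted in the immersed case.
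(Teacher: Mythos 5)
Your proposal is correct and follows essentially the same approach as the paper: bound $|\delta V(X)|$ via the divergence identity~\eqref{DivergenceTheorem} (which holds for immersed surfaces), then in the embedded case nearly saturate the bound by a test field equal to $\nu$ on $\partial M$ and to $-H/|H|$ off a small set, extended to $N^*$ with $\|X\|_0\le 1$. Your closing remark on why the construction can fail for immersions matches the paper's parenthetical observation that such an $X$ need not be well-defined on $N$ at self-intersections.
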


\begin{proof} 
Note by~\eqref{DivergenceTheorem}, which also holds for immersed
surfaces, we have
\begin{equation}\label{dVX}
   | \delta V(X)| \le \left( |\partial M| + \int_M |H|\right) \|X\|_0
\end{equation}
which implies~\eqref{ImmersedInequality}.  In the embedded case, one can choose $X$
with $\|X\|_0\le 1$ so that $X=\nu$ on 
   $\partial M$, 
and so that, except for a small set in $M$, 
$X= -H/|H|$ provided $H\ne 0$.  Then $\delta V(X)$ will be arbitrarily
 close to the
right side of~\eqref{ImmersedInequality}.
By definition of $\bmass{V}$, this implies that the 
 left side of~\eqref{dVX} is greater than or equal to the right side,
 and thus that the two sides must be equal.
(In the immersed case, this choice of $X$
is not always possible because $X$ must be well-defined on $N$.)
\end{proof}

\begin{definition}
A varifold $V$ is called {\em stationary} provided $\delta V=0$ (or, equivalently, provided
 $\bmass{V}=0$.)
\end{definition}

For a smoothly embedded surface $M$, the associated varifold is stationary
if and only if $M$ is a minimal surface without boundary 
(by Theorem~\ref{dVTheorem}).
Thus stationary varifolds are generalizations of minimal surfaces without
boundary.

\begin{theorem}\label{CompactnessTheorem}
Let $V_i$ be a sequence of $k$-dimensional varifolds in $N$.  Suppose that $N$ is compact
and  that $\sup \mass{V_i} <\infty$.
Then there is a subsequence $V_i'$ that converges to a varifold $V$. Furthermore,
\begin{equation}\label{MassesConverge}
     \mass{V} = \lim \mass{V_i'}
\end{equation}
and
\begin{equation}\label{LowerSemicontinuity}
   \bmass{V}  \le \liminf \bmass{V_i'}.
 \end{equation}
\end{theorem}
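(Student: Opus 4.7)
The plan is to view $k$-varifolds as finite positive Borel (equivalently Radon) measures on the space $G_k(N)$, and then apply weak-$*$ compactness. Since $N$ is compact and the fiber at each point of $G_k(N)\to N$ is a compact Grassmannian, the total space $G_k(N)$ is compact. By the Riesz representation theorem, finite positive Borel measures on $G_k(N)$ correspond to positive bounded linear functionals on $C(G_k(N))$, and the uniform bound $\sup_i \mass{V_i}<\infty$ is exactly a uniform bound on the operator norms of these functionals.

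Next I would extract a convergent subsequence. Since $G_k(N)$ is a compact metric space, $C(G_k(N))$ is separable; pick a countable dense subset $\{f_j\}\subset C(G_k(N))$ and use a diagonal argument to produce a subsequence $V_i'$ such that $\int f_j\,dV_i'$ converges for every $j$. A standard density/equicontinuity argument then shows $\Lambda(f):=\lim_i \int f\,dV_i'$ exists for every $f\in C(G_k(N))$ and defines a positive bounded linear functional; Riesz representation yields a finite positive Borel measure $V$ with $\int f\,dV_i'\to\int f\,dV$ for every $f\in C(G_k(N))$. This is the sense in which $V_i'\to V$ as varifolds. (Equivalently, one may invoke sequential Banach--Alaoglu directly.)

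For mass convergence, apply the above to the constant function $f\equiv 1$, which is continuous on the compact space $G_k(N)$, giving $\mass{V_i'}=\int 1\,dV_i'\to\int 1\,dV=\mass{V}$. For lower semicontinuity of the first variation, fix any smooth tangent vectorfield $X$ on $N^*$ with $\|X\|_0\le 1$. The map $(p,S)\mapsto \Div_S X(p)$ is a continuous (indeed smooth) function on $G_k(N)$, so weak-$*$ convergence gives
\[
\delta V(X)=\int \Div_S X(p)\,dV(p,S)=\lim_i \delta V_i'(X).
\]
Since $\delta V_i'(X)\le \bmass{V_i'}$ for each $i$, one obtains $\delta V(X)\le \liminf_i \bmass{V_i'}$; taking the supremum over admissible $X$ yields $\bmass{V}\le\liminf_i\bmass{V_i'}$.

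The only real obstacle is the extraction of the weak-$*$ convergent subsequence; everything else is routine once one notes that both $(p,S)\mapsto 1$ and $(p,S)\mapsto \Div_S X(p)$ are continuous on the compact space $G_k(N)$, so the weak-$*$ convergence applies to them. The argument does not use anything beyond the compactness of $G_k(N)$, Riesz representation, and the definition of $\bmass{V}$ as a supremum over a fixed (not varifold-dependent) class of test vectorfields, which is precisely what makes $\bmass{\cdot}$ lower semicontinuous while $\mass{\cdot}$ is continuous.
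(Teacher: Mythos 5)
Your proof is correct and follows essentially the same route as the paper's: compactness of $G_k(N)$, Riesz representation and sequential weak-$*$ compactness to extract the limit, continuity of the constant function $1$ for mass convergence, and continuity of $(p,S)\mapsto\Div_S X(p)$ plus the supremum definition of $\bmass{\cdot}$ for lower semicontinuity. The only cosmetic difference is that you spell out the diagonal/separability argument where the paper simply invokes Banach--Alaoglu.
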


Here convergence of varifolds means weak convergence of measures.

\begin{proof}
The existence of a convergent subsequence follows from the Riesz Representation Theorem
and the Banach-Alaoglu Theorem.  
Continuity of mass~\eqref{MassesConverge} is an immediate consequence of weak convergence
(since the space $G_k(N)$ is compact.)

Note that $\delta V(X)$ is the integral with respect to the measure $V$ of the continuous function
\begin{align*}
    G_k(N)  &\to \RR \\
    (p,S) &\mapsto  \Div_S X(p).
\end{align*}
 Consequently, by definition of weak convergence of measures,
\begin{equation}\label{weak}
    \delta V(X) = \lim \delta V_i'(X). 
\end{equation}                      
But
\[
    \delta V_i' (X) \le \bmass{V_i'}\cdot \|X\|_0,
\]
so from~\eqref{weak} we see that
\[
   \delta V(X) \le \liminf \bmass{V_i'} \cdot \|X\|_0.
\]
Taking the supremum over all $X$ with $\|X\|_0\le 1$ gives~\eqref{LowerSemicontinuity}.
\end{proof}

\begin{theorem}[Maximum Principle]\label{MaximumPrinciple}
Let $B$ be an open set in a Riemannian $(n+1)$-manifold $N^*$.
Let $M$ be a smooth, connected hypersurface properly embedded in $B$ and dividing $B$
into two components.
Let $\Omega$ be one of the two components of $B\setminus M$.

Suppose that $\Omega$ is mean concave along $M$, i.e., that 
at each point of $M$, the mean curvature is a nonnegative
multiple of the outward unit normal to $\Omega$.

Suppose $S$ is the spatial support of a nonzero stationary $n$-varifold in $N^*$ such that $S$ 
is disjoint from $\Omega$.
If $S$ contains any point of $M$, 
then it must contain all of $M$,
and $M$ must be a minimal surface.
\end{theorem}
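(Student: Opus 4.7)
The plan is to apply the first variation formula for the stationary varifold $V$ against a cleverly chosen family of test vector fields, and extract strong tangent-plane constraints on $\spt\|V\|$ via a large-parameter limit. The crucial input from the hypothesis on $\Omega$ is that the signed distance to $M$, taken positive on the side of $M$ opposite $\Omega$, has nonpositive Laplacian on $M$ and, via the Weingarten formula for parallel hypersurfaces, satisfies $\Delta\rho \le C\rho$ throughout a tubular neighborhood.

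I would first localize: fix $p \in M \cap S$ and work in a tubular neighborhood $U$ of $p$ on which the signed distance $\rho$ to $M$ is smooth, oriented so that $\rho \ge 0$ on the side opposite $\Omega$. Thus $\rho \ge 0$ on $S \cap U$ with equality on $M \cap U$. The local goal is to show $M \cap U \subset S$ and that $M$ is minimal on $M \cap U$; the global conclusions follow by connectedness of $M$, since the touching set $M \cap S$ is then both closed and open in $M$.

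Next, for a nonnegative cutoff $\eta$ with $\eta(p) > 0$ supported in $U$ and a large parameter $\lambda > 0$, I would test $V$ against $X_\lambda := -\nabla\bigl(e^{-\lambda\rho}\eta\bigr)$. A direct expansion of $\Div_T X_\lambda$ using $|\nabla\rho|^2 = 1$ produces a leading term proportional to $-\lambda^2\eta\,|\pi_T \nabla\rho|^2$, a next-order term proportional to $\lambda\eta\,\Div_T\nabla\rho$, and $O(\lambda)$ cross terms with $O(1)$ remainder. Both leading terms are $\le 0$: the first trivially, and the second because for $T$ tangent to the level set of $\rho$ through $q$ we have $\Div_T \nabla\rho = \Delta\rho \le C\rho$, while for all other $T$ the $-\lambda^2$ term dominates. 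Stationarity $\delta V(X_\lambda) = 0$ in the limit $\lambda \to \infty$ forces $|\pi_T \nabla\rho|^2 = 0$ and $\Div_T \nabla\rho = 0$ at $V$-almost every $(q,T)$ with $\eta(q) > 0$: the tangent plane $T$ is tangent to the level set of $\rho$ through $q$, and that level set is itself minimal at $q$.

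Since the tangent planes of $V$ near $p$ are pinned to the level-set tangents of $\rho$, continuity of $\rho$ and closedness of $\spt\|V\|$ imply that $\spt\|V\|$ near $p$ lies in a union of level sets of $\rho$; together with $p \in \spt\|V\| \cap \{\rho = 0\}$, this forces a neighborhood of $p$ in $M$ to be contained in $\spt\|V\| = S$, and the minimality condition gives $H_M \equiv 0$ there. The main obstacle is the precise sign analysis of $\Div_T \nabla\rho$ for $T$ \emph{not} tangent to a level set: the Hessian $\nabla^2\rho$ has a null direction along $\nabla\rho$ and otherwise is the shape operator of parallel hypersurfaces, so its trace on a generic $n$-plane can be positive. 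What rules this out is precisely the $-\lambda^2\eta\,|\pi_T \nabla\rho|^2$ penalty in the limit $\lambda \to \infty$, together with the hypothesis $\rho \ge 0$ on $\spt\|V\|$ (which keeps $e^{-\lambda\rho}$ bounded and preserves convergence of the integrand); implementing this large-parameter argument rigorously is the delicate analytical core of the proof.
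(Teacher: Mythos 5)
The paper does not prove Theorem~\ref{MaximumPrinciple}; it cites Solomon--White~\cite{SolomonWhite}, so there is no in-paper argument to compare with.  On its own merits, your sketch has a genuine gap at exactly the point you flag as ``the delicate analytical core,'' and I don't believe the gap can be closed along the lines you indicate.

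The trouble is the step from the identity $\delta V(X_\lambda)=0$ to the conclusion that $|\pi_T\nabla\rho|^2=0$ for $V$-almost every $(q,T)$ with $\eta(q)>0$.  After absorbing the cross term and the tilt-dependent part of $\operatorname{tr}_T\nabla^2\rho$ into the $\lambda^2$ term (using $\eta=\zeta^2$ and $\operatorname{tr}_T\nabla^2\rho=\Delta\rho+O(|\pi_T\nabla\rho|^2)$), what the computation actually yields is a bound of the form
\[
\int e^{-\lambda\rho}\,\lambda^2\,\eta\,|\pi_T\nabla\rho|^2\,dV\ \le\ C
\]
uniformly in $\lambda$.  Sending $\lambda\to\infty$ and applying Fatou only forces $\eta\,|\pi_T\nabla\rho|^2=0$ for $V$-a.e.\ $(q,T)$ with $\rho(q)=0$; on the set $\{\rho>0\}$ the weight $e^{-\lambda\rho}\lambda^2$ tends to $0$, so no constraint is obtained there.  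But a priori $\|V\|\bigl(\{\rho=0\}\bigr)$ could be zero: the stationary varifold may cluster onto $M$ from the side $\rho>0$ without carrying any mass on $M$ itself.  In that case the uniform bound is vacuous and the argument produces nothing.  This ``one-sided touching without mass on the barrier'' is precisely the central difficulty of the strong maximum principle for varifolds, and the large-$\lambda$ test-field limit does not engage with it.  (A secondary gap: even granting the tangent-plane constraint on $\{\rho=0\}$, the inference that $\spt\|V\|$ near $p$ ``lies in a union of level sets of $\rho$'' and hence contains a neighborhood of $p$ in $M$ is not justified; a.e.\ tangent-plane alignment on a possibly null set does not propagate to a set-theoretic inclusion.)  The Solomon--White argument that the paper refers to handles the one-sided touching through a substantially more delicate mechanism (a quantitative one-sided barrier estimate rather than a pointwise tangent-plane limit), and that is the missing ingredient here.
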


See~\cite{SolomonWhite} for the proof.  
(Note the additional remarks at the end of that paper.)  

For a full treatment of varifolds, see~Allard's paper \cite{AllardFirstVariation}
or chapter~8 of Simon's book~\cite{SimonBook}.
Simon's book only considers varifolds in Euclidean space.
However, as explained in~\cite{AllardFirstVariation}*{\S 4.4},
the study of $k$-varifolds in a Riemannian manifold $N$ can be reduced
to the Euclidean case by  isometrically embedding $N$ into a Euclidean
space $E$.  If one does that, the terminology in this paper needs to be
interpreted accordingly.
For example, ``$V$ is a $k$-varifold in $N$'' should be interpreted as:
$V$ is a $k$-varifold in $E$ and $V$ vanishes outside of 
 \[
       \{ (x,S)\in G_k(E): \text{$x \in N$ and $S\subset \Tan_xN$}  \}.
 \]
  Also the mean curvature
$H$ of a submanifold $M\subset N$ means the component of the mean curvature of $M\subset E$ that is tangent to $N$.   And the norm $\bmass{V}$ (which
should perhaps be written $\bmass{V}_N$) means the supremum of 
$\delta V(X)$ over all $C^1$ vectorfields $X$ with $\|X\|_0\le 1$ that are tangent
to $N$ at each point of $N$.

We conclude this paper by proving that hypersurfaces  moving by 
mean curvature flow cannot collide with stationary varifolds. (This was used in
the proof of Theorem~\ref{FlowTheorem}.)   We begin with the smooth case:

\begin{proposition}\label{AvoidanceProposition}
Let $S$ be the spatial support of a nonzero 
stationary $n$-varifold in a Riemannian $(n+1)$-manifold $N$.
Let $t\in [0,T]\mapsto M_t$ be a mean curvature flow, where the $M_t$'s are compact
smooth embedded hypersurfaces in $N$.
If $M_t$ and $S$ are disjoint at time $0$, then they must remain disjoint for all $t\in [0,T]$.
\end{proposition}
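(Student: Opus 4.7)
The plan is to argue by contradiction via a first-touching-time analysis. I set $t_0 = \inf\{t \in [0, T] : M_t \cap S \ne \emptyset\}$; by smoothness of the flow, compactness of each $M_t$, and closedness of $S$, this infimum is attained at some $p \in M_{t_0} \cap S$, and $t_0 > 0$ since $M_0 \cap S = \emptyset$. In a small ball $B$ around $p$, $M_{t_0} \cap B$ is a smooth embedded graph separating $B$ into two components. The flow has velocity $H$, so for $t$ slightly less than $t_0$, $M_t \cap B$ is obtained from $M_{t_0} \cap B$ by a normal displacement of magnitude $(t_0 - t)|H|$ in the $-H$ direction; let $\Omega$ be the component of $B \setminus M_{t_0}$ into which that displacement points (first assuming $H(p) \ne 0$; the degenerate case follows after shrinking $B$). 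In normal coordinates over $M_{t_0} \cap B$, the family $\{M_t\}_{t < t_0}$ sweeps through a neighborhood of $M_{t_0}$ inside $\Omega$, so any point of $S \cap \Omega$ sufficiently close to $M_{t_0}$ would lie on some $M_t$ with $t < t_0$, contradicting the choice of $t_0$. Hence $S \cap \Omega = \emptyset$ in $B$, and by continuity of $H$, the region $\Omega$ is mean concave along $M_{t_0} \cap B$.

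Next I apply Theorem~\ref{MaximumPrinciple} to the triple $(B, M_{t_0} \cap B, \Omega)$: since the stationary-varifold support $S$ contains $p \in M_{t_0}$ but is disjoint from the mean-concave component $\Omega$, the maximum principle forces $M_{t_0} \cap B \subset S$ and identifies $M_{t_0} \cap B$ as a minimal surface. A standard open-closed argument --- the set of points of the connected component of $M_{t_0}$ through $p$ lying in $S$ is open (by repeating the local analysis at each such point) and closed (by closedness of $S$) --- extends this: the entire connected component $C$ of $M_{t_0}$ through $p$ is a compact smooth closed minimal hypersurface contained in $S$.

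To close the argument, I track the component $C_t$ of $M_t$ corresponding to $C$ at time $t_0$. For $t$ slightly less than $t_0$, $C_t$ is a smooth normal graph over $C$ with height function $u_t$ satisfying a quasilinear parabolic equation whose linearization is the parabolic Jacobi equation $\partial_t u = (\Delta + |A|^2 + \text{Ric}(\nu,\nu))\,u$ on $C$. Since $C_t \cap S = \emptyset$ while $C \subset S$, the function $u_t$ is not identically zero for $t < t_0$, yet $u_{t_0} \equiv 0$; this violates backward uniqueness for the parabolic equation governing $u_t$. The main obstacle is precisely this last step: backward uniqueness for mean curvature flow near a compact minimal hypersurface. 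It is classical for the linearized Jacobi equation (via Carleman estimates or Hilbert-space energy methods) and extends to the nonlinear mean curvature flow, but it is the least elementary ingredient of the argument. An alternative is a direct barrier argument ruling out nontrivial $C_t$ approaching $C$ smoothly from one side, but the cleanest path is through backward uniqueness.
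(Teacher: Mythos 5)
Your proposal shares the paper's two organizing ideas --- a first-contact-time argument and an appeal to the Solomon--White strong maximum principle (Theorem~\ref{MaximumPrinciple}) --- but the way you close the contradiction is genuinely different, and the difference is where the gaps are. The paper does not analyze the first contact of $M_t$ itself with $S$. Instead it thickens $M_0$ to the compact set $K_0$ of points within distance $\eps<\dist(M_0,S)$ of $M_0$, and lets $\partial K_t$ evolve with velocity $H-\delta\nu$, where $\nu$ is the outward unit normal; $\eps$ and then $\delta$ are taken small enough that $\partial K_t$ stays smooth on $[0,T]$ and $M_t$ stays in the interior of $K_t$. If $K_t$ were to touch $S$, then at the first such time $\tau$, the outward normal velocity at a contact point $p$ is $\ge 0$, i.e.\ $H\cdot\nu-\delta\ge0$, hence $H\cdot\nu\ge\delta>0$ at $p$ and (by continuity) on a neighborhood of $p$ in $\partial K_\tau$. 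That is already a contradiction: the maximum principle, applied in a small ball about $p$ with $\Omega$ the interior side of $K_\tau$, would force that piece of $\partial K_\tau$ to be minimal, and it is not. The forcing term $-\delta\nu$ is exactly what makes this a one-step argument.

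Against that, two parts of your proposal have real gaps. First, the local geometry at the contact point. The hypothesis of Theorem~\ref{MaximumPrinciple} requires $\Omega$ to be mean concave along $M_{t_0}\cap B$, and your derivation of this (and of $S\cap\Omega=\emptyset$ in $B$) via the ``sweeping'' picture only works when $H(p)\ne0$ with the correct sign, after shrinking $B$ so that $H\cdot\nu$ keeps that sign. When $H(p)=0$ the direction of approach of $M_t$ is not determined to first order, $H\cdot\nu$ can change sign arbitrarily near $p$, and the surfaces $M_t$ need not lie entirely on one side of $M_{t_0}$ near $p$ (mean curvature flows are not nested), so ``shrinking $B$'' does not repair the argument. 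The paper's $-\delta\nu$ drift rules the degenerate case out for free.

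Second, and more seriously, after applying the maximum principle you conclude that a connected component $C$ of $M_{t_0}$ is a closed minimal hypersurface contained in $S$, and you invoke backward uniqueness for mean curvature flow to contradict $M_0\cap S=\emptyset$. That step is not proved; it is only asserted. Backward uniqueness for mean curvature flow near a closed minimal hypersurface is a deep Carleman-estimate result (Kotschwar), not a consequence of backward uniqueness for the linearized Jacobi heat equation --- the passage from the linearization to the quasilinear flow is the whole difficulty, and you cannot elide it. The paper's argument is designed precisely to avoid any such ingredient: the strict inequality $H\cdot\nu\ge\delta>0$ at first contact makes the maximum principle itself the final step, with no need to run the flow backward in time.
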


\begin{proof}
Fix  a small $\eps$ with $0<\eps< \dist(M_0, S)$ and a small $\delta>0$.
(How small will be specified shortly.)  
 Let
  $K_0= K(\eps, \delta)$ be the set of points in $N$ at distance $\le \eps$ from $M_0$.
Note that $K_0$ is disjoint from $S$.
Let $K_t$, $t\in [0,T]$ be the result of letting $K_0$ evolve so that is boundary moves
with velocity $H - \delta \nu$, where $\nu$ is the outward unit normal to $K_t$.  We choose $\eps$
and $\delta$ sufficiently small that $\partial K_t$ is smooth for all $t\in [0,T]$.
By choosing $\delta$ small enough (after having fixed $\eps$), we can also ensure
that $M_t$ lies in the interior of $K_t$ for all $t\in [0,T]$.

We claim that $K_t$ is disjoint from $S$ for all $t\in [0,T]$.  
For suppose not.
Thus $K_t$ will collide with $S$ at some first time $\tau\le T$.  At any point $p$
of contact of  $K_\tau$ and $S$, we have $H-\delta \nu\ge 0$, so in particular $H\cdot\nu>0$
at $p$, and therefore also on $B\cap \partial K_\tau$ for some small ball $B$ about $p$.
But that violates the maximum principle (Theorem~\ref{MaximumPrinciple}).
(Let the $M$ and $\Omega$ in that theorem be $B\cap \partial K_\tau$ and 
 $B\cap\, \text{interior}(K_\tau)$.)
Thus $K_t$ remains disjoint from  $S$  for all $t\in [0,T]$.
Since $M_t\subset K_t$, the surfaces $M_t$ also remain disjoint from $S$.
\end{proof}

\begin{theorem}\label{AvoidanceTheorem}
Let $S$ be the spatial support of a nonzero 
stationary $n$-varifold in a Riemannian $(n+1)$-manifold $N$.
Let $M_t$, $t\in [0,\infty)$, be 
the family of sets generated from $M_0$ by the level-set mean curvature flow in $N$.
If $M_0$ and $S$ are disjoint at time $0$, then they remain disjoint for all $t$.
\end{theorem}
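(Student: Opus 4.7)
The plan is to extend Proposition~\ref{AvoidanceProposition} from smooth mean curvature flow to the level-set flow by combining that proposition with two standard properties of the level-set flow.

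Call a one-parameter family $t\mapsto A_t$ of closed subsets of $N$ a \emph{weak set flow} if, whenever $\Sigma_t$ is a classical mean curvature flow of compact, smoothly embedded hypersurfaces with $\Sigma_0\cap A_0=\emptyset$, then $\Sigma_t\cap A_t=\emptyset$ for every $t\ge 0$ on which $\Sigma_t$ is defined. Proposition~\ref{AvoidanceProposition} is precisely the statement that the constant family $t\mapsto S$ is a weak set flow.

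I would then invoke two standard facts about the level-set flow $\mathcal{F}_t$, both due to Ilmanen (building on Evans--Spruck and Chen--Giga--Goto). First, maximality: $t\mapsto \mathcal{F}_t(A_0)$ is the largest weak set flow starting from $A_0$; any weak set flow with initial condition $A_0$ is contained pointwise in $\mathcal{F}_t(A_0)$. Applied to the constant weak set flow $t\mapsto S$ of the first step, this yields $S\subset \mathcal{F}_t(S)$ for every $t\ge 0$. Second, the avoidance principle for disjoint initial data: if $A_0$ and $B_0$ are disjoint closed subsets of $N$, then $\mathcal{F}_t(A_0)\cap \mathcal{F}_t(B_0)=\emptyset$ for every $t\ge 0$. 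Applied to $A_0=M_0$ and $B_0=S$ — disjoint by hypothesis — this gives
\[
   M_t\cap \mathcal{F}_t(S)= \mathcal{F}_t(M_0)\cap \mathcal{F}_t(S)=\emptyset,
\]
and combined with $S\subset \mathcal{F}_t(S)$ this yields $M_t\cap S=\emptyset$ for all $t\ge 0$, as required.

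The main obstacle is of a foundational nature rather than a conceptual one: both properties of $\mathcal{F}_t$ invoked above belong to the viscosity-solution theory of the degenerate parabolic level-set PDE, and so lie outside the essentially self-contained varifold framework of this appendix. Granted those standard facts, however, the statement is immediate from Proposition~\ref{AvoidanceProposition}, which already does all of the geometric work: the strong maximum principle for stationary varifolds promotes $S$ to a stationary barrier for smooth mean curvature flow, and this in turn promotes it through the level-set-flow machinery to a barrier for the level-set flow.
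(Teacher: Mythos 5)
Your argument is correct and follows essentially the same route as the paper: the key observation in both is that Proposition~\ref{AvoidanceProposition} makes the constant family $t\mapsto S$ a weak set flow (set-theoretic subsolution), after which one appeals to the standard avoidance machinery of the level-set flow. The paper collapses your two substeps (maximality of the level-set flow and mutual avoidance of level-set flows from disjoint initial data) into a single citation to the avoidance principle for set-theoretic subsolutions, \cite{WhiteTopology}*{7.1}; your version merely unpacks that citation.
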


In particular, the evolution of an initially mean convex hypersurface by mean curvature flow
is such a flow.

\begin{proof}
By Proposition~\ref{AvoidanceProposition}, $t\in [0,\infty) \mapsto S$ is a set-theoretic
subsolution of mean curvature flow  
(in the terminology of~\cite{IlmanenLevelSet} or~\cite{IlmanenBook}*{\S 10}) or a weak set-flow (in the 
terminology of~\cite{WhiteTopology}.)    Theorem~\ref{AvoidanceTheorem} is thus a special case of
the avoidance principle for set-theoretic subsolutions \cite{WhiteTopology}*{7.1}.
\end{proof}

\newcommand{\hide}[1]{}

\begin{bibdiv}

\begin{biblist}

\bib{AllardFirstVariation}{article}{
  author={Allard, William K.},
  title={On the first variation of a varifold},
  journal={Ann. of Math. (2)},
  volume={95},
  date={1972},
  pages={417--491},
  issn={0003-486X},
  review={\MR {0307015},
  Zbl 0252.49028.}}
  \hide{(46 \#6136)}

\bib{HoffmanLiraRosenberg}{article}{
  author={Hoffman, David},
  author={de Lira, Jorge H. S.},
  author={Rosenberg, Harold},
  title={Constant mean curvature surfaces in $M\sp 2\times \mathbb {R}$},
  journal={Trans. Amer. Math. Soc.},
  volume={358},
  date={2006},
  number={2},
  pages={491--507 (electronic)},
  issn={0002-9947},
  review={\MR {2177028},
  Zbl 1079.53088.}}
  \hide{ (2006e:53016)}

\bib{IlmanenLevelSet}{article}{
   author={Ilmanen, Tom},
   title={The level-set flow on a manifold},
   conference={
      title={Differential geometry: partial differential equations on
      manifolds (Los Angeles, CA, 1990)},
   },
   book={
      series={Proc. Sympos. Pure Math.},
      volume={54},
      publisher={Amer. Math. Soc.},
      place={Providence, RI},
   },
   date={1993},
   pages={193--204},
   review={\MR{1216585},
   Zbl 0827.53014.}}
   \hide{ (94d:58040)}

\bib{IlmanenBook}{article}{
   author={Ilmanen, Tom},
   title={Elliptic regularization and partial regularity for motion by mean
   curvature},
   journal={Mem. Amer. Math. Soc.},
   volume={108},
   date={1994},
   number={520},
   pages={x+90},
   issn={0065-9266},
   review={\MR{1196160},
   Zbl 0798.35066.}}
   \hide{ (95d:49060)}

\bib{MeeksRosenberg2004}{article}{
  author={Meeks, William H., III},
  author={Rosenberg, Harold},
  title={Stable minimal surfaces in $M\times \mathbb {R}$},
  journal={J. Differential Geom.},
  volume={68},
  date={2004},
  number={3},
  pages={515--534},
  issn={0022-040X},
  review={\MR {2144539},
 Zbl 1079.53089.}}
 \hide{ (2006b:53007)}

\bib{MeeksRosenberg2005}{article}{
  author={Meeks, William H.},
  author={Rosenberg, Harold},
  title={The theory of minimal surfaces in $M\times \mathbb {R}$},
  journal={Comment. Math. Helv.},
  volume={80},
  date={2005},
  number={4},
  pages={811--858},
  issn={0010-2571},
  review={\MR {2182702},
  Zbl 1085.53049.}
} \hide{ (2006h:53007)}

\bib{NelliRosenberg}{article}{
  author={Nelli, Barbara},
  author={Rosenberg, Harold},
  title={Minimal surfaces in ${\mathbb {H}}\sp 2\times \mathbb {R}$},
  journal={Bull. Braz. Math. Soc. (N.S.)},
  volume={33},
  date={2002},
  number={2},
  pages={263--292},
  issn={1678-7544},
  review={\MR {1940353},
  Zbl 1038.53011.}
} \hide{ (2004d:53014)}

\bib{PittsBook}{book}{
  author={Pitts, Jon T.},
  title={Existence and regularity of minimal surfaces on Riemannian manifolds},
  series={Mathematical Notes},
  volume={27},
  publisher={Princeton University Press},
  place={Princeton, N.J.},
  date={1981},
  pages={iv+330},
  isbn={0-691-08290-1},
  review={\MR {626027},
  Zbl 0462.58003.}
} \hide{ (83e:49079)}

\bib{Rosenberg2002}{article}{
  author={Rosenberg, Harold},
  title={Minimal surfaces in ${\mathbb {M}}\sp 2\times \mathbb {R}$},
  journal={Illinois J. Math.},
  volume={46},
  date={2002},
  number={4},
  pages={1177--1195},
  issn={0019-2082},
  review={\MR {1988257},
  Zbl 1036.53008.}
} \hide{ (2004d:53015)}

\bib{SchoenSurvey}{article}{
  author={Schoen, Richard},
  title={Mean curvature in Riemannian geometry and general relativity},
  conference={ title={Global theory of minimal surfaces}, },
  book={ series={Clay Math. Proc.}, volume={2}, publisher={Amer. Math. Soc.}, place={Providence, RI}, },
  date={2005},
  pages={113--136},
  review={\MR {2167257},
  Zbl 1101.53038.}
} \hide{ (2006f:53044)}

\bib{SchoenSimonStable}{article}{
   author={Schoen, Richard},
   author={Simon, Leon},
   title={Regularity of stable minimal hypersurfaces},
   journal={Comm. Pure Appl. Math.},
   volume={34},
   date={1981},
   number={6},
   pages={741--797},
   issn={0010-3640},
   review={\MR{634285},
   Zbl 0497.49034.}
}  \hide{ (82k:49054)}

\bib{SimonBook}{book}{
  author={Simon, Leon},
  title={Lectures on geometric measure theory},
  series={Proceedings of the Centre for Mathematical Analysis, Australian National University},
  volume={3},
  publisher={Australian National University Centre for Mathematical Analysis},
  place={Canberra},
  date={1983},
  pages={vii+272},
  isbn={0-86784-429-9},
  review={\MR {756417},
  Zbl 0546.49019.}
}  \hide{ (87a:49001)}

\bib{SolomonWhite}{article}{
  author={Solomon, Bruce},
  author={White, Brian},
  title={A strong maximum principle for varifolds that are stationary with respect to even parametric elliptic functionals},
  journal={Indiana Univ. Math. J.},
  volume={38},
  date={1989},
  number={3},
  pages={683--691},
  issn={0022-2518},
  review={\MR {1017330},
  Zbl 0711.49059.} 
}  \hide{ (90i:49052)}

\bib{WhiteNewApplications}{article}{
  author={White, Brian},
  title={New applications of mapping degrees to minimal surface theory},
  journal={J. Differential Geom.},
  volume={29},
  date={1989},
  number={1},
  pages={143--162},
  issn={0022-040X},
  review={\MR {978083},
  Zbl 0638.58005.}
  }
  \hide{ (90e:49051)}

\bib{WhiteTopology}{article}{
   author={White, Brian},
   title={The topology of hypersurfaces moving by mean curvature},
   journal={Comm. Anal. Geom.},
   volume={3},
   date={1995},
   number={1-2},
   pages={317--333},
   issn={1019-8385},
   review={\MR{1362655},
   Zbl 0858.58047.}
   }	
   \hide{ (96k:58051)}	

\bib{WhiteMeanConvex1}{article}{
  author={White, Brian},
  title={The size of the singular set in mean curvature flow of mean-convex sets},
  journal={J. Amer. Math. Soc.},
  volume={13},
  date={2000},
  number={3},
  pages={665--695 (electronic)},
  issn={0894-0347},
  review={\MR {1758759},
   Zbl 0961.53039.} 
   }
   \hide{ (2001j:53098)}

\end{biblist}

\end{bibdiv}		

\end{document}